\newcommand\sE{{\mathcal E}}
\newcommand\sO{{\mathcal O}}
\newcommand\sC{{\mathcal C}}
\newcommand\bR{{\mathbb R}}
\newcommand\bP{{\mathbb P}}
\newtheorem{theorem}{Theorem}[section]
\newtheorem{lemma}[theorem]{Lemma}
\newtheorem{proposition}[theorem]{Proposition}
\newtheorem{corollary}[theorem]{Corollary}
\newtheorem{fact}[theorem]{Fact}
\newtheorem*{theorem*}{Theorem}
\theoremstyle{definition}
\newtheorem{remark}[theorem]{Remark}
\newtheorem{definition}[theorem]{Definition}
\newtheorem*{notation}{Notation}
\title[Bimeromorphic Geometry of Contact Threefolds]{On the Bimeromorphic Geometry\\ of Compact Complex Contact Threefolds} 
\author{Kristina Frantzen and Thomas Peternell}
\date{May 10, 2010}
\address{Mathematisches Institut, Universit\"at Bayreuth, 95440 Bayreuth, Germany\newline
\texttt{Kristina.Frantzen@uni-bayreuth.de, Thomas.Peternell@uni-bayreuth.de}}
\begin{document}
\begin{abstract}
We prove that a compact contact threefold which is bimeromorphically equivalent to a K\"ahler manifold and not rationally connected is the projectivised tangent bundle of a K\"ahler surface.
\end{abstract}
\maketitle
\tableofcontents
%
%%%%%%%%%%%%%%%%%%%%
%%% INTRODUCTION %%%
%%%%%%%%%%%%%%%%%%%%
%
\section{Introduction}  
A (compact) complex manifold $X$ of dimension $2n+1$ is a {\it contact manifold} if there exists a vector bundle sequence 
\begin{equation}\label{contactseq}
0 \to F \to T_X \overset{\theta}{\to} L \to 0, \tag{$\star$}
\end{equation}
where $T_X$ is the tangent bundle and $L$ a line bundle, with the additional property that the induced map 
$\bigwedge^2 F \to L$,
$v \wedge w \mapsto [v,w]/ F \in L$,
given by the Lie bracket $[\, , \, ]$ on $T_X$ is everywhere non-degenerate. The line bundle $L$ is referred to as the \emph{contact line bundle} on $X$.

There are two basic ways to construct contact structures. 
\begin{itemize}
\item A simple Lie group gives rise to a 
Fano contact manifold $X$ (with $b_2(X) = 1$) by taking the unique closed orbit for the adjoint action of the Lie group on the projectivised
Lie algebra, see e.g. \cite{Be98}. 
\item For any compact complex manfold $Y$ the projectivised tangent bundle $X = \bP(T_Y)$ is a contact manifold. 
\end{itemize}
Now the question naturally arises
whether any compact complex contact manifold is given in this way. 

In the following, let $X$ be a compact complex contact manifold.
If $X$ is projective with $b_2(X) = 1$, then $X$ must be a Fano manifold and Beauville \cite{Be98} proved partial results towards the
realisation as closed orbit. 
In general, if $X$ is K\"ahler, Demailly \cite{De02} showed that the canonical bundle $K_X$ is not nef. 
If $X$ is projective with $b_2(X) \geq 2$, Theorem 1.1 in \cite{KPSW00} provides a positive answer to the question above. 
If $X$ is K\"ahler but not projective, then necessarily $b_2(X) \geq 2$ and the second alternative is conjectured to hold, i.e., $X$ should be
a projectivised tangent bundle. However the paper \cite{KPSW00} essentially uses Mori theory, which is, at the moment, not available in
the K\"ahler case, except in dimension 3 where it can be shown that $X$ is a projectivised tangent bundle over a surface (see Section \ref{SectionUniruled}).

In this paper we go one step further in dimension 3: we consider contact threefolds $X$ which are in  \emph{class $\sC$}, i.e., bimeromorphic
to a K\"ahler manifold. We first show that these threefolds must be uniruled. Then we consider the {\it rational 
quotient} $r: X \dasharrow Q$. The meromorphic map $r$ identifies two very general points if and only if they can be joined by a
chain of rational curves. In particular, $X$ is {\it rationally connected} if and only if $\dim Q = 0$. We distinguish the cases $\dim Q =1$ (Theorem \ref{main2}) and $\dim Q =2$ (Theorem \ref{main1}) and show
%}
\begin{theorem*} Let $X$ be a compact contact threefold in class $\sC$. Assume that $X$ is not rationally connected. 
Then there exists a K\"ahler surface $Y$ such that $X \simeq \bP(T_Y)$. In particular, $X$ is K\"ahler.
\end{theorem*}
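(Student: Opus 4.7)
The plan follows the three-stage strategy signaled in the introduction. First, show that $X$ must be uniruled. From the contact sequence $0 \to F \to T_X \to L \to 0$ and the non-degeneracy of the bracket $\bigwedge^2 F \to L$ one gets $\det F \cong L$, so in dimension three
\[
-K_X = \det F \otimes L = 2L.
\]
Demailly's theorem \cite{De02} gives that $K_X$ is not nef in the K\"ahler case; combined with the class $\sC$ hypothesis (passing to a K\"ahler model and invoking the threefold version of the pseudo-effectivity/uniruledness dichotomy \`a la BDPP/Brunella), one upgrades this to: $K_X$ is not pseudo-effective, hence $X$ is uniruled.

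Next, form the rational quotient $r:X \dasharrow Q$. The assumption that $X$ is not rationally connected forces $\dim Q\geq 1$, while uniruledness forces $\dim Q\leq 2$. Thus $\dim Q \in \{1,2\}$, and the main theorem follows by combining the two sub-theorems \ref{main1} (case $\dim Q=2$) and \ref{main2} (case $\dim Q=1$).

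In the case $\dim Q = 2$, a general fiber of $r$ is a free rational curve $C$ with $-K_X\cdot C = 2L\cdot C = 2$, so $L\cdot C = 1$. The decisive step is to show that $C$ is \emph{Legendrian}, i.e.\ $T_C \subset F|_C$. This is done by restricting the contact exact sequence to $C$, computing the splitting type of the globally generated bundle $T_X|_C$, and ruling out the transverse case by a degree count. The resulting Legendrian foliation by free rational curves assembles, after smoothing the rational quotient, into a smooth $\bP^1$-bundle $\pi:X\to Y$, and then the tautological identifications $F = \pi^{-1}\sO(-1)$ and $L = \pi^*T_Y/\sO(-1)$ give $X\simeq \bP(T_Y)$. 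K\"ahlerness of $Y$ is inherited from $X$ being in class $\sC$ via the smooth bundle structure $\pi$.

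In the case $\dim Q = 1$, a general fiber $Z$ of $r$ is a smooth rationally connected surface. One analyses $Z$ through the induced data: the intersection $F\cap T_Z$ is generically a line subbundle of $T_Z$, and the restriction $-K_X|_Z = 2L|_Z$ constrains the anticanonical geometry of $Z$; these together should force $Z$ to be a Hirzebruch surface $\mathbb{F}_n$. The ruling on each fiber then has to be shown to glue to a second global fibration $\pi:X\to Y$ over a surface $Y$ which is itself ruled over $Q$; a final tautological calculation, as above, identifies $X\simeq \bP(T_Y)$. This case is the main obstacle: the contact condition does not literally persist on an even-dimensional fiber, so the classification of $Z$ must be extracted from the indirect trace left by $F$ and $L$ on $Z$, and the simultaneous construction of the base $Y$ and of the second fibration requires careful relative arguments over $Q$ — considerably more delicate than the Legendrian rational curve argument of the $\dim Q = 2$ case.
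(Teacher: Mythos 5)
Your skeleton matches the paper's: uniruledness via non-pseudo-effectivity of $K_X=-2L$ (Demailly plus Brunella's K\"ahler threefold criterion), the dichotomy $\dim Q\in\{1,2\}$ for the rational quotient, and a case analysis ending with $X\simeq\bP(T_Y)$. The $F$-integrality observation for rational curves with $L\cdot C\leq 1$ is also the right starting point (restrict the contact sequence to the normalisation: $\sO(2)\to\sO(a)$ with $a\leq 1$ is zero), and the fiber classification in the $\dim Q=1$ case (adjunction $K_{X_b}=-2L|_{X_b}$ forces a minimal rational surface; divisibility and the nonvanishing of $\theta|_{X_b}\in H^0(X_b,\Omega^1_{X_b}\otimes L|_{X_b})$ exclude $\bP_2$, odd Hirzebruch surfaces and the quadric) can be carried out as you indicate.

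The genuine gap is the step you compress into ``the resulting Legendrian foliation \dots assembles, after smoothing the rational quotient, into a smooth $\bP^1$-bundle'' (and its analogue for $\dim Q=1$). The rational quotient is a priori only almost holomorphic; the entire technical content of the theorem is to show that the evaluation map $p:Z\to X$ from the graph of the covering family $(l_t)$ has no positive-dimensional fiber, i.e.\ that no one-dimensional subfamily of the $l_t$ passes through a single point. Since $L\cdot l_t=1$, $F$-integrality handles the case where these curves are generically irreducible, but one must still confront the case where every $l_t$ through the point is reducible: a component $C^{(1)}$ with $L\cdot C^{(1)}\geq 2$ moves in a $\geq 4$-dimensional family confined to a surface $S_1$, which must split again, producing an infinite chain $C^{(1)},C^{(2)},\dots$ with $L\cdot C^{(k)}\geq 2$. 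Terminating this chain is exactly where the class-$\sC$ hypothesis (as opposed to K\"ahler) bites: one cannot bound degrees against a K\"ahler form on $X$, because the current pushed forward from a K\"ahler modification may be nonpositive on finitely many ``bad'' curves. The paper instead descends to the normalisation $\tilde S_1$ of $S_1$ and constructs a linear functional on $H_2(\tilde S_1,\bQ)$ that is $\geq 1$ on every irreducible curve class (pulling back an ample class from a desingularisation via Mumford--Sakai intersection theory on normal surfaces), so that the homology class of $\tilde C^{(1)}$ cannot decompose into arbitrarily many effective pieces. Without this argument, or a substitute, the induction does not terminate and the $\bP_1$-bundle structure is never established; the same non-splitting issue must be resolved for the family of ruling lines before your second fibration $\pi:X\to Y$ exists in the $\dim Q=1$ case. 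Everything downstream of that (Zariski's main theorem to make $p$ biholomorphic, Koll\'ar's criterion for a $\bP_1$-bundle, and $\pi_*(L)\simeq T_Y$ following KPSW) is comparatively routine.
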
 
The remaining open case that $X$ is rationally connected, in particular Moishezon, will require different methods. Probably
it will be necessary to consider rational curves $C$ with $-K_X \cdot C$ minimal, but positive. 
%
%
%%%%%%%%%%%%%%%%%%%%%%%%%%%%
%%% SECTION UNIRULEDNESS %%%
%%%%%%%%%%%%%%%%%%%%%%%%%%%%
%
%
\section{Uniruledness and splitting}\label{SectionUniruled}
We shall use the following notation:
\begin{definition}
 A compact complex manifold $X$ is said to be in \emph{class $\sC$} if $X$ is bimeromorphically equivalent to a K\"ahler manifold.
\end{definition}
An important property of manifolds in class $\sC$ is the compactness of the irreducible components of the cycle space (cf.\ \cite{campana})

The key for our investigations is the following
\begin{theorem} \label{uniruled} 
Let $X$ be a compact contact threefold in class $\sC$. Then $X$ is uniruled.
\end{theorem}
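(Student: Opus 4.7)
Assume for contradiction that $X$ is not uniruled; I will show that pseudo-effectivity of $K_X$ then propagates, via the contact relation, to pseudo-effectivity of $L^{-1}$, and that the contact distribution $F$ is thereby forced to be Frobenius integrable by Demailly's integrability theorem for distributions defined by holomorphic one-forms. This will contradict the very definition of a contact structure.

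The non-degeneracy of the Lie bracket $\bigwedge^2 F \to L$ forces $\det F \simeq L$; combined with the contact sequence $0 \to F \to T_X \to L \to 0$, this yields $-K_X = \det T_X \simeq L^{\otimes 2}$, so $K_X$ is pseudo-effective if and only if $L^{-1}$ is pseudo-effective. Now if $X$ is not uniruled, then $K_X$ is pseudo-effective: for smooth threefolds in class $\sC$ this follows by passing to a K\"ahler bimeromorphic model $\widetilde X$, invoking the uniruledness criterion for K\"ahler threefolds (available from the Brunella/H\"oring--Peternell circle of results around the MMP for K\"ahler threefolds), and transferring back to $X$ via bimeromorphic invariance of pseudo-effectivity of $K$. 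Hence $L^{-1}$ is pseudo-effective.

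Dualizing the contact epimorphism $\theta: T_X \to L$ yields a section of $\Omega^1_X \otimes L$ and a saturated inclusion $L^{-1} \hookrightarrow \Omega^1_X$. Demailly's Frobenius integrability theorem from \cite{De02} asserts that such a one-form $\theta \in H^0(\Omega^1_X \otimes L)$ has Frobenius-integrable kernel whenever $L^{-1}$ is pseudo-effective. Applied to $F = \ker \theta$, this forces the contact distribution to be integrable. On the other hand, the contact axiom is precisely the statement that the Frobenius obstruction
\[
\bigwedge\nolimits^2 F \;\lra\; T_X/F = L, \qquad v\wedge w \mapsto [v,w] \bmod F,
\]
is everywhere an isomorphism, so $F$ is nowhere integrable. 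This contradiction proves that $X$ must be uniruled.

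\textbf{Main obstacle.} The delicate technical point is that both Demailly's integrability theorem and the uniruledness criterion via non-pseudo-effectivity of $K$ are most naturally stated for K\"ahler manifolds, whereas $X$ is only assumed to lie in class $\sC$. A careful bimeromorphic descent argument is therefore required; the compactness of cycle space components on class-$\sC$ manifolds (noted immediately before the statement) and the bimeromorphic invariance of pseudo-effectivity of the canonical bundle are the natural tools to handle this passage.
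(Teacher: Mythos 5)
Your proof is correct and is essentially the contrapositive of the paper's own argument: both rest on exactly the same two ingredients, namely Demailly's Frobenius integrability theorem \cite{De02} (which is precisely what forces $L^{-1}$ to be non-pseudo-effective on a K\"ahler contact manifold) and Brunella's criterion \cite{Brunella} that a K\"ahler threefold with non-pseudo-effective canonical bundle is uniruled, linked by $K_X = 2L^{-1}$ and the bimeromorphic transfer of pseudo-effectivity via pushforward of positive closed currents. The descent issue you flag at the end is resolved in the paper simply by pulling back $\theta$ to the K\"ahler modification $\hat X$, where $\hat\theta \in H^0(\hat X, \Omega^1_{\hat X}\otimes \pi^*L)$ still satisfies $\hat\theta\wedge d\hat\theta \neq 0$, and applying Demailly there to $\pi^*L$ — the compactness of cycle-space components plays no role in this step.
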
 
\begin{proof} 
Let 
$\pi: \hat X \to X$ 
be a modification such that $\hat X$ is K\"ahler.  It is a well-established fact that $\hat X$ is 
uniruled if and only if $K_{\hat X}$ is not pseudo-effective, i.e., the Chern class $c_1(K_{\hat X})$ is not represented by a positive closed 
current. The projective case in any dimension is treated in \cite{BDPP04} based on the uniruledness theorem of Miyaoka-Mori \cite{MM86}. The K\"ahler 
case in dimension three has been proved by Brunella \cite[Cor.\ 1.2]{Brunella}. 

The contact structure on $X$ is given by
$ \theta \in H^0(X,\Omega^1_X \otimes L)$; note that $\theta \wedge d\theta \neq 0$.
Via $\pi^*$, the $L$-valued form $\theta$ induces a form
$\hat \theta \in H^0(\hat X,\Omega^1_{\hat X} \otimes \pi^*(L))$.
By \cite[Cor.\ 1]{De02}, the pullback of the dual line bundle $\pi^*(L^{-1})$ is not pseudo-effective. Since $K_X = 2L^{-1}$, the line bundle
$\pi^*(K_X)$ is not pseudo-effective which is equivalent to say that $K_X$ is not pseudo-effective. 
Since $\pi_*(K_{\hat X}) = K_X$, the Chern class $c_1(K_{\hat X})$ cannot be represented by a positive closed current $\hat T$,
because otherwise $c_1(K_X)$ would be represented by the positive closed current $\pi_*(\hat T)$. 
Hence $K_{\hat X}$ is not pseudo-effective, and we conclude by the uniruledness criterion stated above.
\end{proof} 
As a consequence we obtain the following classification result for compact K\"ahler contact threefolds generalising the well-known projective case (see \cite{KPSW00} for further references). 
\begin{corollary} \label{coruni} 
Let $X$ be a compact K\"ahler contact threefold. Then either $X \simeq \bP_3$ or $X = \bP(T_Y)$ for a
K\"ahler surface $Y$. 
\end{corollary}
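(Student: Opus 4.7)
The plan is to reduce the classification to the uniruledness result of Theorem \ref{uniruled} combined with a Mori-theoretic analysis of extremal contractions in the spirit of \cite{KPSW00}, taking advantage of the fact that Mori theory is available for Kähler threefolds.

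By Theorem \ref{uniruled}, $X$ is uniruled, so $K_X$ is not pseudo-effective. I would split into two cases according to $b_2(X)$. If $b_2(X) = 1$, I would first observe that a compact Kähler manifold with $b_2 = 1$ and $K_X$ not pseudo-effective is projective: in fact, with $N^1(X)_{\bR}$ one-dimensional, any Kähler class must lie in the same ray as $-K_X$, so $-K_X$ is ample and $X$ is Fano. One then appeals to the known classification of Fano contact threefolds with $b_2 = 1$: since $-K_X = 2L$ with $L$ ample, the Fano index of $X$ is at least $2$, and the structure of the contact distribution forces $X \simeq \bP_3$ (with contact line bundle $\sO_{\bP_3}(2)$).

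If $b_2(X) \geq 2$, I would invoke the cone and contraction theorem for Kähler threefolds (due to Peternell and collaborators) to produce an extremal contraction $\phi : X \to Y$ of a $K_X$-negative extremal ray $R$. Then I would follow the case analysis of \cite{KPSW00}: the contact distribution $F \subset T_X$ has rank $2$ and the form $\theta \wedge d\theta$ is a nowhere-vanishing section of $K_X^{-1} \otimes L^{-1} = L$, which severely restricts possible contractions. Divisorial contractions are excluded because the exceptional divisor would carry too many rational curves tangent to $F$, contradicting non-degeneracy of the Lie bracket on $F$; small contractions are likewise ruled out. A fibre-type contraction to a point returns to the $b_2 = 1$ situation, which is excluded by assumption. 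A contraction to a curve can be excluded by showing the generic fibre (a smooth surface) would carry a contact structure, which is impossible in even dimension. Thus $\phi: X \to Y$ is a fibre-type contraction onto a normal Kähler surface $Y$ with one-dimensional fibres; the contact condition forces $\phi$ to be a $\bP_1$-bundle, $Y$ to be smooth, and the inclusion $F \hookrightarrow T_X$ to realise the canonical identification $X = \bP(T_Y)$ with its relative tautological contact structure.

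The main obstacle is the second case: transferring the Mori-theoretic part of the argument of \cite{KPSW00} from the projective to the Kähler category. One must have at one's disposal (i) a cone-and-contraction theorem for uniruled Kähler threefolds, (ii) a classification of the possible extremal contractions fine enough to isolate the $\bP_1$-bundle case, and (iii) enough deformation theory of rational curves on Kähler threefolds to imitate the bend-and-break/lift-to-$\bP(F)$ arguments that underlie the contact-geometric exclusions. Granted these tools (established in dimension three but not in general), the proof reduces to a case-by-case check that mirrors the projective argument.
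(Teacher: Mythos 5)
Your strategy is quite different from the paper's. The paper's entire proof is two lines: Theorem \ref{uniruled} gives uniruledness, hence $X$ is not \emph{simple} (there is a positive-dimensional subvariety through the general point), and the classification is then quoted directly from \cite[Theorem 4.1]{Pe01}, which handles non-simple compact K\"ahler contact threefolds. You instead propose to re-run the Mori-theoretic argument of \cite{KPSW00} in the K\"ahler category. Your $b_2=1$ case is fine: with $h^{1,1}=b_2=1$ and $K_X$ not pseudo-effective, $-K_X$ is a positive line bundle, so $X$ is projective and Fano by Kodaira, and the classification of Fano contact threefolds gives $\bP_3$.

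The problem is the $b_2\ge 2$ case. First, you treat the cone-and-contraction theorem for K\"ahler threefolds as an available black box. At the time of this paper that machinery did not exist in the generality you need (it was completed by H\"oring--Peternell only years later); the whole point of the authors' detour through the rational quotient, and of the citation of \cite{Pe01} in this corollary, is to avoid having to contract extremal rays on a non-projective K\"ahler threefold. You flag this yourself as ``the main obstacle'' and then assert it is ``established in dimension three,'' which begs the question. Second, your exclusion of a fibre-type contraction onto a curve is wrong as stated: the generic fibre of a del Pezzo fibration is an even-dimensional surface and of course carries no contact structure, but nothing forces the contact structure of $X$ to restrict to one on the fibre. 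The correct mechanism (used in Lemma \ref{fiber} of the paper in a related situation) is that a surface cannot be $F$-integral, so $\theta$ restricts to a \emph{nonzero} section of $\Omega^1_S\otimes L|_S$ with $2L|_S=-K_S$, and one must then check by hand that no such section exists on the del Pezzo surfaces compatible with the divisibility $-K_S=2L|_S$ (e.g.\ $\bP_1\times\bP_1$). The exclusion of divisorial contractions is similarly only gestured at. As written, the proposal is a plausible programme mirroring the projective argument rather than a proof; the paper's actual argument is a one-step reduction to an already-proved theorem and sidesteps all of this.
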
 
\begin{proof} 
By Theorem \ref{uniruled} above, the threefold $X$ is uniruled. In particular, there is a positive-dimensional
subvariety through the general point of $X$, i.e., $X$ is not \emph{simple}.
The claim now follows from \cite[Theorem 4.1]{Pe01}.
\end{proof} 
\begin{remark} \label{classCimpliesPE}
The proof of Theorem \ref{uniruled} above actually shows that the canonical bundle of a compact contact manifold
in class $\sC$ of any dimension is not pseudo-effective. However in dimensions at least 4, unless $X$ is projective, it is completely open, whether this implies
uniruledness. 
\end{remark} 
%
%%%%%%%%%%%%%%%%%
%%% SPLITTING %%%
%%%%%%%%%%%%%%%%%
%
%
We now make a digression and consider the contact sequence \eqref{contactseq}.
It is an interesting question whether this sequence can split. In the case where $X$ is Fano, LeBrun \cite[Cor.\,2.2]{LeBrun}
showed that splitting never occurs. By the following theorem, the same is true if $X$ is in class $\mathcal C$.
\begin{theorem} 
Let $X$ be a compact contact manifold in class $\mathcal C$.
Then the contact sequence \eqref{contactseq} does not split. 
\end{theorem}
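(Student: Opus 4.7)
The plan is to assume the contact sequence splits and derive a numerical contradiction from the rational curves tangent to the resulting foliation. If the sequence splits, the line subbundle $L \hookrightarrow T_X$ defines a regular holomorphic rank-one foliation $\mathcal{F}$ on $X$ with tangent bundle $L$, hence canonical bundle $K_{\mathcal{F}} = L^{-1}$. From the proof of Theorem \ref{uniruled} (via \cite{De02}) we already know that $L^{-1}$ is not pseudo-effective on $X$; pulling back along a K\"ahler modification $\pi : \hat X \to X$ yields a foliation $\hat{\mathcal F}$ on the K\"ahler manifold $\hat X$ whose canonical bundle is still not pseudo-effective, since in the transformation formula $K_{\hat{\mathcal F}} = \pi^{*} K_{\mathcal F} + E$ the exceptional correction $E$ is effective.

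Next I would invoke a foliation analogue of the uniruledness criterion --- Bogomolov--McQuillan in the projective case, Brunella (for threefolds) or Campana--P\u{a}un (in general) in the K\"ahler setting --- which says that a rank-one holomorphic foliation on a compact K\"ahler manifold whose canonical bundle is not pseudo-effective is algebraically integrable with rational leaves. Consequently $\hat{\mathcal F}$, and therefore $\mathcal F$ itself after push-forward, admits a covering family of rational leaves, yielding a meromorphic fibration $\phi : X \dasharrow B$ whose general fibre is a rational leaf $C \simeq \bP_1$ of $\mathcal F$.

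The contradiction then follows from computing $\deg N_{C/X}$ in two different ways. On the one hand, because $C$ is a leaf of $\mathcal F$ we have $T_C = L|_C$ and hence $L \cdot C = 2$; combined with $-K_X = (n+1) L$, where $\dim X = 2n+1$, this gives
\[
\deg N_{C/X} = (-K_X \cdot C) - \deg T_C = 2(n+1) - 2 = 2n.
\]
On the other hand, $\phi$ is a holomorphic submersion in a Zariski open neighbourhood of the general leaf, so $C$ is a smooth scheme-theoretic fibre and the relative tangent sequence identifies $N_{C/X}$ with $\phi^{*} T_{B, \phi(C)} \otimes \mathcal O_C$, a trivial bundle of degree zero. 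Since $\dim X \geq 3$ forces $n \geq 1$, the equality $2n = 0$ is absurd and the splitting cannot occur.

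The hard part is the foliation-theoretic input, not the numerical contradiction: one has to transfer the non-pseudo-effectivity of $K_{\mathcal F}$ cleanly through the K\"ahler modification, apply the K\"ahler (not merely projective) uniruledness theorem for foliations by curves, and verify that the resulting family of rational leaves on $\hat X$ descends to a genuine covering family of rational curves tangent to $\mathcal F$ on $X$ with the claimed intersection numbers with $L$ and $K_X$. Once those steps are in place, the two-way computation of $\deg N_{C/X}$ closes the argument.
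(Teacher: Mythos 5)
Your argument is correct in substance but follows a genuinely different route from the paper. The paper's proof is a three-line cohomological argument: by Beauville's result on manifolds with split tangent bundle, a splitting $T_X \simeq F \oplus L$ forces $c_1(L) \in H^1(X,L^*)$ and $c_1(F) \in H^1(X,F^*)$, two \emph{complementary} direct summands of $H^1(X,\Omega^1_X)$; since the contact condition gives $c_1(F) = n\,c_1(L)$, both classes vanish, so $K_X$ is numerically trivial --- contradicting the non-pseudo-effectivity of $K_X$ established in the proof of Theorem \ref{uniruled} (cf.\ Remark \ref{classCimpliesPE}). You instead treat the summand $L \subset T_X$ as a regular rank-one foliation with $K_{\mathcal F} = L^{-1}$ not pseudo-effective, invoke Brunella's theorem (the same reference the paper uses for uniruledness of K\"ahler threefolds, but here in arbitrary dimension) to produce rational leaves, and derive the contradiction $\deg N_{C/X} = 2n$ versus triviality of the normal bundle of a general fibre. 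Both proofs ultimately rest on Demailly's non-pseudo-effectivity of $L^{-1}$; yours buys a more geometric picture at the cost of substantially heavier machinery (foliated uniruledness on compact K\"ahler manifolds, algebraic integrability, construction of the leaf fibration via the cycle space --- which does require $X \in \sC$, so that hypothesis is genuinely used in your argument as well).

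One step needs repair: you justify that $K_{\hat{\mathcal F}} = \pi^* K_{\mathcal F} + E$ is not pseudo-effective ``since $E$ is effective,'' but that implication runs the wrong way --- adding an effective divisor to a non-pseudo-effective class can very well produce a pseudo-effective one. The correct argument is the one already used in the proof of Theorem \ref{uniruled}: if $K_{\hat{\mathcal F}}$ carried a positive closed current $\hat T$, then $\pi_*(\hat T)$ would represent $c_1(\pi_*(\pi^*K_{\mathcal F}+E)) = c_1(K_{\mathcal F})$ because $E$ is $\pi$-exceptional, contradicting Demailly. With that substitution, and a sentence confirming that the general fibre of the almost holomorphic leaf fibration is a single reduced leaf (so that generic smoothness applies and $N_{C/X}$ is indeed trivial), your proof closes.
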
 
\begin{proof} 
Suppose we have a splitting $T_X \simeq F \oplus L$, hence $\Omega^1_X \simeq F^* \oplus L^*$. 
Then it is well-known (see e.g. Beauville \cite{Be00}), that 
$ c_1(L) \in H^1(X,L^*) \subset H^1(X,\Omega^1_X)$
and
$ c_1(F)  \in H^1(X,F^*)$.
Since $c_1(F) = n c_1(L) $ in $H^1(X,\Omega^1_X)$ and since $X$ is in class $\mathcal C,$ we conclude that
$ c_1(L) = c_1(F) = 0$
in $H^1(X,\Omega^1_X)$, and therefore also in $H^2(X,\bR)$. Hence $K_X$ is numerically trivial and 
consequently, due to Remark \ref{classCimpliesPE}, $X$ cannot be in class $\sC$.
\end{proof} 
%
%
%%%%%%%%%%%%%%%%%%%%%%%%%
%%% A TECHNICAL LEMMA %%%
%%%%%%%%%%%%%%%%%%%%%%%%%
%
Let $X$ be a compact contact manifold $X$ with contact sequence \eqref{contactseq}.
A subvariety $S \subset X$, i.e., closed irreducible analytic subset in $X$, is called \emph{$F$-integral} if $T_{S,x} \subset F_x$ for all smooth points $x \in S$. 
\begin{notation}
A \emph{holomorphic family $(C_t)_{t\in T}$ of curves in $X$} is given by a diagram
\[
 \begin{xymatrix}{
 Z \ar[r]^p \ar[d]^q & X\\
 T 
}
 \end{xymatrix}
\]
such that
\begin{itemize}
 \item 
$T$ is an irreducible subspace of the cycle space of curves in $X$
\item
$q^{-1}(t)$ is the cycle corresponding to $t \in T$
\item
$C_t = p (  q^{-1}(t)) $ as cycle. 
\end{itemize}
\end{notation}
An important tool will be the following lemma, the proof of which is based on the observation that a surface covered by a family of $F$-integral curves is itself $F$-integral. 
\begin{lemma} \label{integral} 
Let $X$ be a compact contact threefold with contact line bundle $L$. Let $(C_t)_{t\in T}$ be a 1-dimensional family 
of generically irreducible rational curves passing through a fixed point $x_0 \in X$. Then $L \cdot C_t \geq 2$. 
\end{lemma}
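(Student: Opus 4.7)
The plan is to split into two cases according to whether the generic $C_t$ is $F$-integral. Let $\nu_t\colon \bP^1 \to C_t$ be the normalization of a generic member. The composition
\[
T_{\bP^1}\xrightarrow{d\nu_t}\nu_t^*T_X\xrightarrow{\nu_t^*\theta}\nu_t^*L
\]
is a section of $T^*_{\bP^1}\otimes\nu_t^*L$, which has degree $L\cdot C_t-2$, and it is identically zero precisely when $C_t$ is $F$-integral. Hence if generic $C_t$ is not $F$-integral, this is a non-zero section of $\sO_{\bP^1}(L\cdot C_t-2)$, forcing $L\cdot C_t\geq 2$; so the content of the lemma lies in the case where the generic $C_t$ is $F$-integral.

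Assume now that the generic $C_t$ is $F$-integral. After resolution and, if necessary, a finite base change on $T$, replace $Z$ by a smooth surface $\tilde Z$ together with morphisms $\tilde p\colon\tilde Z\to X$ and $\tilde q\colon\tilde Z\to T$ whose general fibre is $\tilde C_t\cong\bP^1$ (the normalization of $C_t$), and a section $\sigma\colon T\to\tilde Z$ with $\tilde p\circ\sigma\equiv x_0$. The $F$-integrality of the generic fibre means that $\tilde p^*\theta\in H^0(\tilde Z,\Omega^1_{\tilde Z}\otimes\tilde p^*L)$ vanishes on the relative tangent bundle $T_{\tilde Z/T}$, so it descends to a section
\[
\omega\in H^0\bigl(\tilde Z,\ \tilde q^*\Omega^1_T\otimes\tilde p^*L\bigr).
\]
The key point is that $\omega|_{\tilde C_t}$ vanishes to order at least $2$ at $\sigma(t)$. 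In local coordinates $(s,t)$ near $\sigma(t)$ with $\sigma(t)=(0,t)$, we have $\tilde p(0,t)=x_0$, so $d\tilde p(\partial_t)|_{s=0}=0$ and $u(t):=d\tilde p(\partial_s)|_{(0,t)}$ is the tangent direction to $\tilde C_t$ at $\sigma(t)$, which lies in $F_{x_0}$ because $C_t$ is $F$-integral. As $u(t)$ varies inside the linear subspace $F_{x_0}\subset T_{X,x_0}$, its derivative $u'(t)$ is again in $F_{x_0}=\ker\theta_{x_0}$. Expanding,
\[
d\tilde p(\partial_t)=s\cdot u'(t)+O(s^2),\qquad \omega(\partial_t)=\theta\bigl(d\tilde p(\partial_t)\bigr)=s\cdot\theta_{x_0}(u'(t))+O(s^2)=O(s^2).
\]

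Restricting $\omega$ to $\tilde C_t\cong\bP^1$ yields a section of a line bundle of degree $L\cdot C_t$ which vanishes to order $\geq 2$ at $\sigma(t)$; if this section is non-zero we conclude $L\cdot C_t\geq 2$, as required. Otherwise $\omega|_{\tilde C_t}\equiv 0$ for the general $t$, hence $\omega\equiv 0$ on $\tilde Z$ by continuity, and therefore $\tilde p^*\theta\equiv 0$. The image $S=\tilde p(\tilde Z)$ is genuinely $2$-dimensional (a $1$-dimensional image would force the cycles $C_t$ to all coincide, contradicting the irreducibility of the parameter space $T$), and $\tilde p^*\theta=0$ implies $T_S\subset F$ at smooth points of $S$, i.e.\ $S$ is $F$-integral. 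But then $\theta|_S=0$ gives $d\theta|_{T_S}=0$, and because $T_S=F|_S$ this contradicts the non-degeneracy of $d\theta|_F$ that is built into the contact condition. This excludes the remaining sub-case and finishes the proof. The main technical point I expect to have to check carefully is the setup of the resolution $\tilde Z$ and the section $\sigma$ so that the local computation above applies for the general $t$, especially when the curves $C_t$ happen to be singular at $x_0$; the order-$2$ vanishing itself only requires that $\tilde Z$ be smooth along $\sigma(T)$, which we are free to arrange.
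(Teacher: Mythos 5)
Your proof is correct and follows essentially the same route as the paper: the degree computation $T_{\bP_1}\to\nu_t^*L$ reduces everything to the case of $F$-integral curves, and the contradiction comes from showing that the surface swept out by $F$-integral curves through the fixed point $x_0$ is itself $F$-integral, which is impossible since $d\theta|_F$ is non-degenerate and $F$ has rank $2$. The only difference is that the paper delegates the middle step to the proof of Proposition 4.1 of Kebekus's \emph{Lines on contact manifolds}, whereas you carry out that computation explicitly (the order-two vanishing of the horizontal component of $\tilde p^*\theta$ along the section over $x_0$); your version even yields the slightly stronger dichotomy that either $L\cdot C_t\geq 2$ directly or the whole surface is $F$-integral.
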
  
\begin{proof}
We assume to the contrary that $L \cdot C_t \leq 1$ for all $t \in T$. By restricting the contact sequence \eqref{contactseq}
to an irreducible rational curve $C_t$ and, if necessary, pulling it back to the normalisation $\eta: \tilde{C_t} \to C_t$, one finds that the map $\mathcal O (2) \simeq T_{\tilde{C_t}} \to \eta^*L \simeq \mathcal O (a)$ with $a \leq 1$ is trivial and therefore $T_{C_t,x} \hookrightarrow F_x$  for $x \in C_t$ smooth. I.e., for the general $t \in T$ the curve $C_t$ is $F$-integral.

Consider the surface $S = \bigcup_{t \in T} C_t \subset X$ covered by the curves $C_t$. Then the proof of \cite[Proposition 4.1]{Keb01} shows that $S$ is $F$-integral. But since any $F$-integral
subvariety in $X$ has dimension at most 1, this yields a contradiction. 
\end{proof}
%
%
%%%%%%%%%%%%%%%%%%%%%%%%%%%%%%%%%%%%%%%%%%%%%%
%%% SECTION 2DIMENSIONAL RATIONAL QUOTIENT %%%
%%%%%%%%%%%%%%%%%%%%%%%%%%%%%%%%%%%%%%%%%%%%%%
%
%
\section{The case of a 2-dimensional rational quotient}

We assume in this section that $X$ is a compact contact threefold in class $\sC$
with a rational quotient
$$ r: X \dasharrow Q $$
of dimension $\dim Q = 2$. We refer the reader to the books \cite{debarre}, \cite{Ko96} and the references therein for relevant details on the contruction and the properties of a rational quotient. A fundamental result of Graber, Harris, and Starr \cite{GHS} states that the quotient $Q$ is not uniruled. In case $X$ has dimension three, this result actually has previously been known. 

The meromorphic map $r: X \dasharrow Q $ is almost holomorphic, i.e., $r$ is proper holomorphic on a dense open set in $X$, 
and its general fiber is $\bP_1$. Thus,
we have a unique covering family $(l_t)_{t\in T}$ of rational curves with graph $Z$,
\[
 \begin{xymatrix}{
 Z \ar[r]^p \ar[d]^q & X \ar@{-->}[d]^r\\
 T  &  Q
}
 \end{xymatrix}
\]
and $\dim T = 2$. Since $r$ is almost holomorphic, the map $p$ is bimeromorphic.
By possibly passing to the normalisation, we may assume both $Z$ and $T$ normal. Moreover, we may take $Q = T$. 
\begin{lemma}  \label{irr}
All curves $l_t$ satisfy $L \cdot l_t = 1 $ and all irreducible curves $l_t$ are $F$-integral. 
\end{lemma}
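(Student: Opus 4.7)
The plan is to treat the two assertions in order: first compute $L\cdot l_t$ via a deformation-theoretic dimension count, using that the generic member of the family is a free rational curve; then deduce $F$-integrality from the restricted contact sequence by comparing degrees on the normalization of $l_t$.

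For the intersection number, a general $l_t$ is a smooth rational curve (since $r$ is almost holomorphic with general fiber $\bP^1$), and the covering property of the family combined with the bimeromorphy of $p$ forces $l_t$ to be \emph{free}, i.e.\ $N_{l_t/X}\simeq\mathcal{O}(a)\oplus\mathcal{O}(b)$ with $a,b\ge 0$. From $\deg N_{l_t/X}=-K_X\cdot l_t - 2\ge 0$ we read off $L\cdot l_t\ge 1$. For the upper bound, freeness gives $H^1(\bP^1,N_{l_t/X})=0$, so the cycle space is smooth at $[l_t]$ of dimension $h^0(N_{l_t/X})=-K_X\cdot l_t$. Since any rational curve in $X$ is contained in a single fiber of $r$, every nearby smooth deformation of $l_t$ is itself a fiber, so $T$ contains an open neighborhood of $[l_t]$ in the cycle space and, being irreducible, has the same dimension as the cycle-space component. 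Hence $\dim T=-K_X\cdot l_t=2$, which gives $L\cdot l_t=1$ for general $t\in T$; constancy of intersection numbers in a flat family of cycles then yields the equality for every $t$.

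For the $F$-integrality claim, let $l_t$ be irreducible and $\eta:\tilde l_t\simeq\bP^1\to l_t$ its normalization. Pulling back the contact sequence \eqref{contactseq} along $\eta$ and precomposing with the canonical inclusion $T_{\tilde l_t}\hookrightarrow\eta^*T_X$ produces a morphism
\[
\mathcal{O}(2)\;\simeq\;T_{\tilde l_t}\;\longrightarrow\;\eta^*L\;\simeq\;\mathcal{O}(L\cdot l_t)\;=\;\mathcal{O}(1),
\]
which vanishes for degree reasons. Thus $T_{\tilde l_t}$ factors through $\eta^*F$, and at every smooth point $x\in l_t$ the tangent line $T_{l_t,x}$ lies in $F_x$, as required.

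The main delicate step is verifying that a general $l_t$ is free, since this is what allows us to equate the cycle-space dimension with $-K_X\cdot l_t$. It rests on the bimeromorphy of $p$ together with the covering property of the family: at general points of a general $l_t$ the differential of $p$ is an isomorphism, and the deformations of $l_t$ inside $T$ together with the tangent direction of $l_t$ generate $T_X$ pointwise on an open subset of $l_t$, forcing $N_{l_t/X}$ to be nef on $\bP^1$. Once freeness is in hand, the remaining ingredients are entirely standard, and no further obstacle arises.
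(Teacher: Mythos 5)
Your proof is correct, and the second assertion ($F$-integrality via the vanishing of $\mathcal{O}(2)\to\eta^*L\simeq\mathcal{O}(1)$ on the normalization) is exactly the paper's argument, which simply refers back to the proof of Lemma \ref{integral}. For the intersection number the paper is more direct: a general $l_t$ is a general (hence smooth) fiber of the almost holomorphic map $r$, so its normal bundle is trivial and adjunction gives $-K_X\cdot l_t=\deg T_{\bP_1}=2$ at once, whence $L\cdot l_t=1$; the value for all $t$ follows, as in your write-up, from constancy of the homology class in the irreducible family. You instead run the computation backwards through deformation theory, establishing freeness of a general $l_t$ and the identity $\dim T=h^0(N_{l_t/X})=-K_X\cdot l_t=2$; this works, but it obliges you to verify two extra points (that $T$ is open in its component of the cycle space, which you correctly reduce to the fact that nearby rational curves are contracted by $r$, and that generic generation of $N_{l_t/X}$ forces nefness), both of which become unnecessary once one observes that the normal bundle of a general fiber is already trivial. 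So the two routes buy the same conclusion, with the paper's being shorter and yours making the deformation-theoretic mechanism explicit.
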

\begin{proof} 
The general $l_t$ is a general fiber of $r$, hence $-K_X \cdot l_t = 2$ by adjunction and therefore $L \cdot l_t = 1$.
The same is then true for all $l_t$.  All irreducible curves $l_t$ are consequently $F$-integral (cf. proof of Lemma \ref{integral}). 
\end{proof} 
In the following we will make use of the deformation theory of rational curves. This is to say we consider a rational curve
$C \subset X$, given by a bimeromorphic morphism $f: \bP_1 \to X$ and consider the deformations $f_t$ of $f$.
We obtain a family 
$(C_t) = (f_t(\mathbb P_1))$ and then take its closure in the cycle space, because in general the family $(f_t)$ will not be compact, or in other
words, the family $(C_t)$ will split. Here it is essential that $X$ is in class $\sC$, hence all irreducible components of the cycle space of $X$ are compact.
We repeatedly use the following basic fact, see e.g. \cite[Theorem II.1.3]{Ko96}. 
\begin{fact}\label{DimOfDeform}
Let $X$ be a compact threefold and let $C$ be a rational curve in $X$.
If
$ -K_X \cdot C \geq m$,
then $C$ will deform as rational curve in an at least $m$-dimensional family.
\end{fact}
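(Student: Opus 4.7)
The plan is to invoke the standard deformation theory of holomorphic maps $\bP_1 \to X$. Let $f \colon \bP_1 \to X$ be a bimeromorphic parametrisation of the rational curve $C$ and consider the Douady space $\Hom(\bP_1, X)$ near the point $[f]$. Its Zariski tangent space at $[f]$ is $H^0(\bP_1, f^*T_X)$ and obstructions live in $H^1(\bP_1, f^*T_X)$, so the local dimension at $[f]$ satisfies
\[ \dim_{[f]}\Hom(\bP_1, X) \;\geq\; h^0(\bP_1, f^*T_X) - h^1(\bP_1, f^*T_X) \;=\; \chi(\bP_1, f^*T_X). \]

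Next I would evaluate the Euler characteristic by Riemann-Roch on $\bP_1$. The bundle $f^*T_X$ has rank $3$, and since $f$ is bimeromorphic onto its image its degree is $\deg f^*T_X = -K_X \cdot C$. Thus
\[ \chi(\bP_1, f^*T_X) \;=\; -K_X \cdot C + 3 \;\geq\; m+3. \]
To pass from deformations of the map $f$ to deformations of the image curve, I would quotient by the reparametrisation action of $\Aut(\bP_1) = \mathrm{PGL}_2(\bC)$, a three-dimensional group that acts with finite stabiliser at any bimeromorphic $f$. This drops the dimension by exactly three and produces a family of image curves of local dimension at least $m$.

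The main subtle point is converting this local dimension count into a genuine family $(C_t)_{t \in T}$ with compact irreducible base $T$, in line with the paper's notation. The class~$\sC$ hypothesis, invoked throughout the paper, is precisely what ensures this: the irreducible components of the Barlet cycle space of $X$ are compact, so the image of the relevant component of $\Hom(\bP_1, X)/\mathrm{PGL}_2$ lands in such a component, whose dimension must therefore be at least $m$. The generic member remains an irreducible rational curve, while boundary members may degenerate into reducible cycles of rational components; this is consistent with the paper's definition of a holomorphic family of curves and does not affect the dimension bound.
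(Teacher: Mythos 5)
Your proposal is correct and is essentially the argument the paper relies on: the paper gives no proof of this Fact but cites Koll\'ar's Theorem II.1.3, whose content is exactly your deformation-theoretic bound $\dim_{[f]}\Hom(\bP_1,X)\geq \chi(\bP_1,f^*T_X)=-K_X\cdot C+3$ followed by subtracting $\dim \mathrm{PGL}_2(\bC)=3$ for reparametrisations. Your closing remark on compactness via the class $\sC$ hypothesis matches the paper's own discussion immediately preceding the Fact.
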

The following proposition is the technical core of this section.
\begin{proposition} \label{iso} 
The map $p: Z \to X$ is an isomorphism. In particular, the rational quotient $r: X \to T$ is holomorphic and equidimensional.
\end{proposition}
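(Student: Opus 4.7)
The strategy is to prove that $p$ has only zero-dimensional fibers; combined with the facts that $p$ is proper and bimeromorphic, $Z$ normal, and $X$ smooth, Zariski's Main Theorem will then give the isomorphism. Once $p$ is an isomorphism, the rational quotient $r = q \circ p^{-1}$ is everywhere holomorphic, and because each fiber $q^{-1}(t)$ is the $1$-cycle $l_t$ it has pure dimension one, giving equidimensionality.

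The real work lies in excluding positive-dimensional fibers of $p$. Since $Z$ sits in $X \times T$ as (the normalisation of) the closure of the graph of $r$, the fiber $p^{-1}(x_0)$ is in bijection with $T_{x_0} := \{t \in T : x_0 \in l_t\}$, so $\dim p^{-1}(x_0) = \dim T_{x_0}$. I will suppose for contradiction that $T_{x_0}$ is positive-dimensional for some $x_0 \in X$, and then produce inside it an irreducible $1$-dimensional analytic curve $T'$ whose generic member $l_t$ is irreducible. Since the general $l_t$ (for $t \in T$) is the irreducible general fiber of the rational quotient, the locus $T^{\mathrm{red}} \subsetneq T$ parametrising reducible cycles is a proper closed subvariety, and a sufficiently general $T' \subset T_{x_0}$ is not contained in $T^{\mathrm{red}}$.

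Then $(l_t)_{t \in T'}$ is a $1$-dimensional family of generically irreducible rational curves through $x_0$, and by Lemma \ref{irr} each satisfies $L \cdot l_t = 1$. This is precisely the configuration ruled out by Lemma \ref{integral}, which forces $L \cdot l_t \geq 2$; the resulting contradiction shows that $T_{x_0}$ is zero-dimensional for every $x_0$, hence $p$ is finite and consequently an isomorphism.

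The main obstacle is the edge case $T_{x_0} \subseteq T^{\mathrm{red}}$, in which every cycle of the positive-dimensional subfamily through $x_0$ is reducible. Here one must replace the cycles $l_t$ by a family of their irreducible components $C_t$ passing through $x_0$, and then verify both that the family $(C_t)$ remains positive-dimensional in the cycle space and that $L \cdot C_t \leq 1$, in order to invoke Lemma \ref{integral}. Controlling the $L$-degree on each such component --- ideally by showing $L$ is non-negative on every rational component of a limit cycle in the family, so that $L \cdot C_t \leq L \cdot l_t = 1$ --- is the delicate technical step, and is where Fact \ref{DimOfDeform} together with a careful analysis of the degenerations of the $l_t$'s should enter.
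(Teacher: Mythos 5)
Your reduction to showing that $p$ has finite fibers, the identification $\dim p^{-1}(x_0)=\dim T(x_0)$, and the disposal of the case where the $1$-dimensional subfamily through $x_0$ is generically irreducible (via Lemma \ref{irr} and Lemma \ref{integral}) all match the paper's argument. But the case you defer --- $T(x_0)$ entirely contained in the locus of reducible cycles --- is not an ``edge case''; it is the entire content of the proof, and the route you sketch for it does not work. You propose to find a component $C_t$ of each $l_t$ through $x_0$ with $L\cdot C_t\le 1$ by arguing that $L$ is non-negative on every component of a limit cycle, so that the component degrees are bounded by $L\cdot l_t=1$. That would require $L$ to be nef on the relevant curves, which is not known and is precisely what fails: when $l_t$ splits, a component with $L$-degree $\ge 2$ can be compensated by components of negative $L$-degree. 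All one can extract for free is a component $C^{(1)}$ with $L\cdot C^{(1)}\ge 1$, and the case $L\cdot C^{(1)}\ge 2$ cannot be fed into Lemma \ref{integral} at all.

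The paper handles exactly this: if $L\cdot C^{(1)}=1$ one still needs a separate connectedness-of-fibers argument to rule out the subcase where the deformations of $C^{(1)}$ cover all of $X$ (your sketch omits this); and if $L\cdot C^{(1)}\ge 2$, then $-K_X\cdot C^{(1)}\ge 4$ forces an at least $4$-dimensional family confined to a surface $S_1$, which must split again, producing inductively components $C^{(k)}$ with $L\cdot C^{(k)}\ge 2$ (Lemma \ref{Moishezon} is needed to choose each $C^{(k+1)}$ through a general point of $S_1$ so that Lemma \ref{integral} applies). Termination of this process is the genuinely delicate point: since $X$ is only in class $\sC$, one cannot bound degrees against a K\"ahler form on $X$, and the paper instead passes to the normalisation of $S_1$ and invokes Lemma \ref{split} (a linear functional on $H_2$ of a normal Moishezon surface that is $\ge 1$ on all irreducible curve classes) to forbid a class from being an arbitrarily long sum of effective curves. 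None of this machinery appears in your proposal, so as written the argument has a genuine gap precisely where the difficulty lies.
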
 
\begin{proof} 
For $x \in X$ we let $T(x) $ be the analytic subset of all $t \in T$ such that $x  \in l_t$. 
Since the general $l_t$ does not pass through $x$, it follows that $\dim T(x) \leq 1$.
In the following we show that $\dim T(x) = 0$ for all $x$; in other words, $p$ is finite.
Since the map $p$ is of
degree $1$ and has connected fibers by Zariski's main theorem, the finiteness of $p$ forces $p$ to be biholomorphic. 

Suppose now to the contrary that $\dim T(x) = 1$ for some fixed $x \in X$. 
If $T(x)$ happens to be reducible, we replace it by an irreducible component of dimension 1. In the following, we shall therefore assume that $T(x)$ is irreducible.

Let $S$ be the surface covered by the $l_t$ belonging to $T(x)$:
\[
S = \bigcup_{t \in T(x)} l_t.
\]
If the general $l_t$ through $x$ is irreducible, then the Lemmata \ref{integral} and  \ref{irr} yield a contradiction.

So we are left with the case when all $l_t,\  t \in T(x)$ are reducible. In this case $S$ itself might be reducible.  
For $t \in T(x)$  we decompose $l_t$ into its irreducible components and write
$ l_t = \sum a_t^j C_t^j$. Since $L \cdot l_t = 1$ for all $t \in T$, there exists at least one component $C_t^j$ in this decomposition with $L \cdot C_t^j \geq 1$. We pick $t \in T(x)$ general and let $C^{(1)}$ be a component of $l_t$  with $L \cdot C^{(1)} \geq 1$. Then by Fact \ref{DimOfDeform}, $C^{(1)}$ deforms in an at least 2-dimensional family $(C_t^{(1)})_{t \in T_1}$. 

Suppose first that  
$$ L \cdot C^{(1)}_t = 1.$$ 
If the family $(C_t^{(1)})_{t \in T_1}$covers a surface, then we find a 1-dimensional subfamily through a
fixed point, contradicting Lemma \ref{integral}. If the family covers all of $X$,
then, since there is only
one covering family of generically irreducible rational curves in $X$, the family $(C^{(1)}_t)_{t \in T_1}$ must be the original family $(l_t)_{t\in T}$, in particular $T = T_1$. 
In other words, we have  $t_0 \in T(x) $ and $t_1  \in T $ such that
 $l_{t_0} = l_{t_1} + R$ 
with an effective curve $R$. Thus 
$p^{-1}(x) $ contains more than one point for every $x \in l_{t_1}$. Since $p$ has connected fibers, we conclude that 
$ \dim p^{-1}(x) = 1$
for every $x \in l_{t_1}$. Then either all curves $l_t,\ t \in T$  pass through $l_{t_1}$, which is absurd since $r$ is almost holomorphic, or there exists a 1-dimensional subfamily 
$(l_{t_1} + C_u)_{u \in U}$ of $(l_t)_{t\in T}$ with $\dim U = 1$. In this second case however, since the subfamily $(l_{t_1} + C_u)_{u \in U}$ does not contain the curve $l_{t_1}$ itself, it follows $p^{-1}(x) $ is not connected, a contradiction.

So we are left with 
$$L \cdot C^{(1)}_t \geq 2,$$ 
i.e., $ -K_X \cdot C^{(1)}_t \geq 4$, 
and the $C^{(1)}_t$ deform in an at least 4-dimensional family (cf. Fact \ref{DimOfDeform})
which must stay in an irreducible component 
$S_1$ of $S$. (The deformations of $C^{(1)}_t$ cannot cover all of $X$ since there is a unique covering family of generically irreducible rational curves in $X$, and this family, namely $(l_t)_{t\in T}$, is 2-dimensional and fulfils $- K_X \cdot l_t = 2$). 
We want to exhibit a new family $(C^{(2)}_t)$ in the surface $S_1$ such that $L \cdot C^{(2)}_t \geq 2$.  

In order to construct this new family we notice that the 4-dimensional family $(C^{(1)}_t)_{t \in T_1}$ must split. In fact, through any two points
of $S_1$ there is a positive-dimensional subfamily.
Now we choose carefully a splitting component $C^{(2)}$ such that $L \cdot C^{(2)} \geq 2$, namely we want to achieve that $C^{(2)} $ passes through a general point of $S_1$. 
By Lemma \ref{Moishezon} we obtain a generically non-splitting family $(h_u)_{u \in U}$ of rational curves $h_u$ in $S_1$ with $\dim U \geq 2$ such that for general $u \in U$ there 
exists $t(u) \in T_1$ such that $h_u$ is an irreducible component of $C^{(1)}_{t(u)}$. Since the family $(h_u)$ covers exactly $S_1$, there exists a 1-dimensional subfamily through a 
general point of $S_1$ and we take $C^{(2)}$ to be a general member of this subfamily. By Lemma \ref{integral} we obtain
$$ L \cdot C^{(2)} \geq 2.$$
Again, $-K_X \cdot C^{(2)} \geq 4$ implies that $C^{(2)}$ moves in an at least 4-dimensional family of rational curves, say $(C^{(2)}_t)_{t \in T_2}$. 

Inductively we obtain families $(C^{(k)}_t)_{t\in T_k}$ in $S_1$ such that 
$$ L \cdot C^{(k)}_t \geq 2.$$
Our aim now is to find an 
argument that this procedure must stop at some point, i.e., that $L \cdot C^{(k)}_t \geq 2$ cannot occur infinitely many times. 

If $X$ is K\"ahler with K\"ahler form $\omega$, this follows from the fact that the intersection number $C_t^{(k)} \cdot \omega$ strictly decreases and that
all classes $C_t^{(k)}$ are \emph{integer} classes in a ball 
$\{ a \, \vert\,  a \cdot \omega \leq K \} \subset H^2(X,\bR)$.

Let us briefly explain the difficulty arising from the fact that $X$ is not necessarily K\"ahler. 
If $X$ is merely in class $\sC$, we cannot argue in this way, because we will have curves with ``semi-negative'' cohomology. 
To be precise, we choose a sequence of blow-ups in points and smooth curves
$ \pi: \hat X \to X$
such that $\hat X$ is K\"ahler, fix a K\"ahler form $\hat \omega$ on $\hat X$, and form the current
$ R = \pi_*(\omega)$.
Then $R \cdot C > 0$ for all curves not contained in the center of $\pi$. On the other hand, there are finitely many curves $B_1, \ldots, B_N$
such that $ R \cdot B_j \leq 0$.
These ``bad'' curves have to be taken into account. 

We are able to get around this difficulty since every splitting takes place in
the fixed surface $S_1$. Inside this surface we will not have any curves with ``negative'' homology. 

We consider the normalisation
$$ \eta: \tilde S_1 \to S_1.$$
We define a family $(\tilde C^{(k)}_t)$ in $\tilde S_1$ by letting $\tilde C^{(k)}_t $ be the strict transform of $C^{(k)}_t$ in $\tilde S_1$ for general $t$ and then take
closure in the cycle space.
Let $\tilde C^{(k)}$ be the strict transform of $C^{(k)}$ in $\tilde S_1$. Then we obtain a splitting 
$$  \tilde C_{t_k}^{(k)} = \tilde C ^{(k+1)} + \tilde R _k $$
for some $t_k$. 
Inductively we find
\[
 \tilde C^{(m)} \equiv \tilde C_{t_m}^{(m)} = \tilde C^{(m+1)} + \tilde R_m,
\]
for all $m \in \mathbb N$. Here $\equiv$ denotes homology equivalence in $\tilde S_1$. It follows that 
\[
 \tilde C^{(1)} \equiv \tilde C^{(m)} + \sum_{j=1}^{m-1}\tilde R_j,
\]
i.e., $\tilde C^{(1)}$ is homology equivalent to a sum of arbitrary many effective curves in $\tilde S_1$. This contradicts Lemma \ref{split}.
\end{proof}
We now prove the two technical lemmata used in the proof of Proposition \ref{iso} above.
%
%
%%%%%%%%%%%%%%%%%%%%%%%%%%%%%%
%%% TWO TECHNICAL LEMMATA  %%%
%%%%%%%%%%%%%%%%%%%%%%%%%%%%%%
%
%
\begin{lemma} \label{split}
 Let $S$ be a compact connected normal Moishezon surface. Then there exists a linear map $\varphi: H_2(S, \mathbb Q) \to \mathbb Q$ such that $\varphi ([C]) \geq 1$  for all classes of irreducible curves $C$ in $S$.
\end{lemma}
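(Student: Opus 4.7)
The plan is to reduce the problem to a smooth projective situation via a resolution of singularities and then to invoke Mumford's numerical pull-back construction to produce $\varphi$.

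First I would pick a resolution $\pi:\tilde S \to S$. Since $\tilde S$ is a smooth compact Moishezon surface, Moishezon's theorem in dimension two makes it projective, so I can fix an ample Cartier divisor $A$ on $\tilde S$. Let $E_1,\ldots,E_n$ be the irreducible components of the exceptional locus of $\pi$, lying above the (finitely many, isolated) singular points of $S$.

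Following Mumford, I would then determine the unique rational numbers $a_1,\ldots,a_n$ such that
$$A' := A + \sum_i a_i E_i \qquad \text{satisfies} \qquad A' \cdot E_j = 0 \text{ for every } j.$$
Existence and uniqueness come from the negative definiteness of the intersection matrix $(E_i\cdot E_j)$. The crucial sign statement I need is $a_i\geq 0$; this holds because $-(E_i\cdot E_j)$ is an M-matrix (distinct exceptional curves on the smooth surface $\tilde S$ meet non-negatively), so its inverse has non-negative entries, while the right-hand side $(A\cdot E_j)_j$ is non-negative by ampleness of $A$.

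Next I would build $\varphi$. The pushforward $\pi_*:H_2(\tilde S,\bQ)\to H_2(S,\bQ)$ is surjective with kernel generated by the classes $[E_1],\ldots,[E_n]$ (standard for resolutions of isolated normal surface singularities), and $A'\cdot E_j=0$ by construction, so the assignment $\varphi(\pi_*\beta):=A'\cdot\beta$ yields a well-defined linear form $H_2(S,\bQ)\to\bQ$. Finally, for an irreducible curve $C\subset S$ with strict transform $\tilde C\subset\tilde S$ one has $\pi_*[\tilde C]=[C]$, so
$$\varphi([C]) \;=\; A\cdot\tilde C \;+\; \sum_i a_i\,(E_i\cdot\tilde C).$$
The leading term is a positive integer and hence at least $1$; the remaining summands are non-negative because $a_i\geq 0$ and because $\tilde C$ is irreducible and distinct from every $E_i$ (it maps birationally onto $C$), so $E_i\cdot\tilde C\geq 0$. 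This gives $\varphi([C])\geq 1$. The main technical point is the sign analysis for the $a_i$: one needs the correction by exceptional curves used to orthogonalise $A$ against the $E_j$ to only add to, rather than subtract from, the intersection with strict transforms; everything else is essentially formal once the resolution is projective and the image/kernel description of $\pi_*$ is in place.
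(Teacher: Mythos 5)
Your proof is correct and follows essentially the same route as the paper: resolve $S$, use projectivity of the smooth Moishezon resolution to get an ample divisor, and apply Mumford's numerical pull-back (orthogonalisation against the exceptional curves, with nonnegative coefficients by negative definiteness of the intersection matrix) to define $\varphi$ and bound it below by $A\cdot\tilde C\geq 1$. The only cosmetic difference is that you place the Mumford correction on the ample divisor and check well-definedness via $\ker\pi_*$ on $H_2$, whereas the paper places it on $\sigma^*C$ and verifies well-definedness through Sakai's Poincar\'e homomorphism; by the orthogonality relations both computations yield the same number.
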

\begin{proof}
It suffices to construct $\varphi$ on the subspace $V$ of $H_2(S; \mathbb Q)$ generated by classes of irreducible curves.
Let $\sigma: \hat S \to S$ be  a desingularisation of $S$ and note that the surface $\hat S$ is projective. We let $\hat H$ be an ample divisor on $\hat S$ and $\sigma_* ( \hat H) = H$ be its push-down to $S$. Using the intersection theory on normal surfaces established in \cite{MumfordIntersection} and \cite{Sakai}, we define 
\[
\varphi([C]) = C\cdot H = (\sigma^* C)\cdot (\sigma^* H).
\]
Here $\sigma^* D$ denotes the sum $\overline D + \sum a_i E_i$ of the strict transform $\overline D$ of the divisor $D$ in $\hat S$ and an appropriately weighted sum of the exceptional curves of $\sigma$. 

In order to check that $\varphi$ is well-defined on homology classes, it suffices to show that $c_1(\mathcal O(\sigma^*C))=0$ for every $C$ with $[C]=0 \in  H_2(S, \mathbb Q)$. 
Following the notation and results presented in  \cite{Sakai}, Section 3, this is equivalent to $c_1(\mathcal O (C))\in \mathrm{ker} (\sigma^*) = \mathrm{ker}(\eta_S) \subset H^2(S, \mathbb Q)$. Here
$\eta_S: H^2(S, \mathbb Q) \to H_2(S, \mathbb Q)$
denotes the Poincar{\'e} homomorphism on $S$. We may write
\[
\eta_S (c_1(\mathcal O (C))) = \sigma_*(\eta_{\hat S}(c_1( \widetilde{\mathcal O(C)})))
\]
 for the Poincar{\'e} isomorphism $\eta_{\hat S}: H^2(\hat S, \mathbb Q) \to H_2(\hat S, \mathbb Q)$ on $\hat S$ and $\widetilde{\mathcal O(C)} = \mathcal O (\sigma^*C)$. Since $\eta_{\hat S}(c_1(\mathcal O (\sigma^*C) )) = [\sigma^* C]$ on the smooth surface $S$, we conclude $\eta_S (c_1(\mathcal O (C))) = \sigma_*[\sigma^* C] = [C]=0$ and obtain the desired vanishing. 

It remains to check that $\varphi([C]) \geq 1$ for all classes of irreducible curves $C$ in $S$. We have 
\[
\varphi([C]) = (\sigma^* C)\cdot (\sigma^* H)  = (\sigma^* C) \cdot (\overline H + \sum b_i E_i).
\] 
Since $(\sigma^* C)\cdot E_j =0$ for all $j$ by definition of $\sigma^* C$ (cf. \cite[Section 1]{Sakai}), we conclude
\[
\varphi([C]) =(\sigma^* C)\cdot  \overline H =  (\overline C + \sum a_i E_i) \cdot \overline H.
\] 
Since $\overline H = \hat H $ is ample and $C$ is effective, in particular $\overline C$ is effective and $a_i >0$ for all $i$, the desired inequality follows. 
\end{proof}
\begin{lemma} \label{Moishezon} 
Let $S$ be an irreducible Moishezon surface with a covering family $(C_t)_{t \in T}$ of (rational) curves. Suppose $\dim T \geq 4$. 
Let $T' \subset T$ be the subset of those $t$ for which $C_t$ splits. Then $\dim T' \geq 2$.
\end{lemma}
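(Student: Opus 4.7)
The plan is to proceed by contradiction: assume $\dim T' \leq 1$ so that the general $C_t$ is irreducible on $S$, and derive a contradiction by applying Mori's bend-and-break on a resolution of singularities.

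First I would take a desingularisation $\sigma : \hat S \to S$ (a smooth Moishezon surface is projective) and form the family $(\hat C_t)_{t \in T}$ of strict transforms on $\hat S$. Since $\sigma$ is bimeromorphic, this is again a covering family of generically irreducible rational curves on the smooth projective surface $\hat S$, of dimension $\dim T \geq 4$.

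The second step is a parameter count. Set $I = \{(t, s) \in T \times \hat S : s \in \hat C_t\}$, of dimension $\dim T + 1$, and $J = I \times_T I$, of dimension $\dim T + 2 \geq 6$. I claim the natural map $J \to \hat S \times \hat S$ is dominant: for a general $s \in \hat S$ the set of curves through $s$ has dimension $\dim T - 1 \geq 3$, and since each $\hat C_t$ is itself $1$-dimensional, these curves cannot all lie inside a single proper subvariety of $\hat S$ (a $1$-dimensional subvariety has only finitely many irreducible components). Hence through a general pair $(p_1, p_2) \in \hat S \times \hat S$ there passes a subfamily of the $\hat C_t$ of dimension at least $\dim T - 2 \geq 2$.

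The third step applies bend-and-break on $\hat S$ (cf.\ \cite[Ch.~II.5]{Ko96}): a positive-dimensional family of morphisms $\bP_1 \to \hat S$ pinned at two fixed points is not proper, and its closure in the Kontsevich stable map compactification contains a reducible degeneration. This yields, for each general pair $(p_1, p_2)$, at least one splitting $\hat C_t = \hat D_1 + \cdots + \hat D_k$ with $p_1$ and $p_2$ on different components. Letting $(p_1, p_2)$ range over the $4$-dimensional $\hat S \times \hat S$ produces a $\geq 4$-dimensional locus of triples $(\text{reducible cycle}, p_1, p_2)$; projecting to the cycle factor, the fibres are at most $2$-dimensional (pairs of points on a $1$-dimensional cycle), so the image has dimension $\geq 2$. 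Since $p_1, p_2$ are general in $\hat S$, none of the components $\hat D_j$ lies in the $\sigma$-exceptional locus, so the push-forward $C_t = \sigma_* \hat D_1 + \cdots + \sigma_* \hat D_k$ is a bona fide splitting on $S$, giving $\dim T' \geq 2$ and contradicting the assumption.

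The main obstacle I anticipate is the rigorous verification of dominance of $J \to \hat S \times \hat S$ in the second step: the heuristic is that an irreducible covering family of dimension $\geq 4$ on a surface must be $2$-point-connecting, but the argument outlined above (ruling out that the $\geq 3$-dimensional subfamily through a general $s$ collapses into a fixed curve) needs to be stated carefully and may require additional normalisation of $T$.
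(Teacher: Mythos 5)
Your argument is correct and is essentially the paper's: the same incidence count showing that through two general points there passes a subfamily of dimension at least $\dim T - 2 \geq 2$, followed by bend-and-break to force a splitting, followed by a dimension count of the incidence variety to conclude $\dim T' \geq 2$. The paper organises the final count by fixing one point $x$ and varying it over $S$ (obtaining a $1$-dimensional split subfamily through each $x$) rather than varying pairs over $\hat S \times \hat S$, and it works directly on $S$ without passing to a desingularisation, but these differences are cosmetic.
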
 

\begin{proof} Let $x \in S$ and
$T(x) = \{ t \in T \  \vert \ x \in C_t \}$.
Since $\dim T \geq 4$ by assumption, we have $\dim T(x) \geq 3$. (Consider the graph $p: Z \to S$ of the family $(C_t)_{t \in T}$ and observe that $\dim ( p^{-1}(x)) \geq 3$ and $q:Z \to T$ restricted to $p^{-1}(x)$ is finite.)
The same dimension count substituting $T$ by $T(x)$ shows that 
$$ \dim (T(x) \cap T(x')) \geq 2 $$
for $x, x' \in S$. Hence there exists a 2-dimensional subfamily through $x,x'$ and therefore we obtain a 1-dimensional subfamily through $x$ and $x'$ such that 
all members split. In other words
$$ \dim (T' \cap T(x) ) \geq 1. $$
Varying $x$ we conclude $\dim T' \geq 2$.
\end{proof} 
Having established Proposition \ref{iso}, it remains to show that the rational quotient $r: X \to T$ is actually a $\mathbb P_1$-bundle.
\begin{proposition} \label{bundle} Assume that the rational quotient $r: X \to T$ is holomorphic and equidimensional.
Then $r$ is a $\bP_1$-bundle, $T$ is smooth and $X = \bP(T_T)$.
\end{proposition}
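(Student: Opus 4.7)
The plan is to first promote $r : X \to T$ to a smooth $\bP_1$-bundle (forcing $T$ to be smooth) and then use the contact form to realize the identification $X \simeq \bP(T_T)$ via the universal property of the projective bundle.

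For the first step, I would show that every fiber $l_t$ of $r$ is a smooth $\bP_1$. Since $p : Z \to X$ is an isomorphism by Proposition \ref{iso}, the supports $|l_t|$ form a set-theoretic disjoint partition of $X$. Suppose $l_t = \sum a_i C_i$ with $a_i \geq 1$ and $C_i$ irreducible. Since $L \cdot l_t = 1$ (Lemma \ref{irr}), at least one component $C_{i_0}$ satisfies $L \cdot C_{i_0} \geq 1$. In the principal case $L \cdot C_{i_0} = 1$ we have $-K_X \cdot C_{i_0} = 2$, so $C_{i_0}$ deforms in a $2$-dimensional family of rational curves; this family either covers a surface (contradicting Lemma \ref{integral} via a $1$-dimensional subfamily through a fixed point with $L$-degree $1$) or covers $X$ and, by uniqueness of the covering family of generically irreducible rational curves, coincides with $(l_s)_{s\in T}$. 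In the latter case $C_{i_0} = l_{t'}$ for some $t'$, and disjointness of fiber supports forces $t' = t$ and $l_t = C_{i_0}$. The remaining case $L \cdot C_{i_0} \geq 2$ is excluded by the splitting-and-iteration argument of Proposition \ref{iso}, whose infinite-splitting-inside-a-fixed-surface would contradict Lemma \ref{split}. Non-reducedness is incompatible with $L \cdot l_t = 1$. Thus every $l_t$ is irreducible, reduced, and of arithmetic genus $0$ (propagating from the generic $\bP_1$ fiber), hence a smooth $\bP_1$, so $r$ is smooth and $T$ is smooth.

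Then I would extract the identification $X \simeq \bP(T_T)$ from the contact form. The map $\theta : T_X \twoheadrightarrow L$ vanishes on the vertical tangent sheaf $T_{X/T}$ (the fibers being $F$-integral by Lemma \ref{irr}) and therefore factors through a surjection $\bar\theta : r^*T_T \twoheadrightarrow L$, whose kernel $G$ is a line subbundle $G \hookrightarrow r^*T_T$. By the universal property of $\bP(T_T)$ (in the convention of lines in $T_T$), this inclusion defines a $T$-morphism $\phi : X \to \bP(T_T)$ with $\phi^* \mathcal O_{\bP(T_T)}(-1) = G$. On any fiber $l_t \simeq \bP_1$ the bundle $G|_{l_t}$ has degree $\deg r^*T_T|_{l_t} - \deg L|_{l_t} = -1$, so $\phi|_{l_t} : \bP_1 \to \bP(T_{T,r(t)}) = \bP_1$ is a degree-$1$ morphism, i.e., an isomorphism. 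A $T$-morphism between $\bP_1$-bundles that is fibrewise an isomorphism is itself an isomorphism, giving $X \simeq \bP(T_T)$.

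The main obstacle is the first step, specifically ruling out reducible fibers with a component of $L$-degree at least $2$ --- a situation which, by $L \cdot l_t = 1$, necessarily pairs with components of negative $L$-degree. This case is handled by the same splitting-and-iteration mechanism which formed the technical core of Proposition \ref{iso}, relying on Lemmata \ref{split} and \ref{Moishezon}.
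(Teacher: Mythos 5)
Your proof is correct and follows essentially the same route as the paper: the core step in both is to show that every fiber of the now-holomorphic $r$ is an irreducible, reduced rational curve, by deforming a component $C_{i_0}$ of a putative reducible fiber with $L\cdot C_{i_0}\geq 1$ and deriving a contradiction. Two remarks. First, your treatment of the case $L\cdot C_{i_0}\geq 2$ via the full splitting-and-iteration machinery of Proposition \ref{iso} works, but is heavier than necessary: once $r$ is holomorphic, every deformation of $C_{i_0}$ is a rational curve and hence (since $T$ is not uniruled) is contracted by $r$, so the entire deformation family is trapped in the fibers of $r$; only finitely many members of the family can be supported in any single fiber, so the family has dimension at most $\dim T=2$, which contradicts the at least $4$-dimensional deformation space outright. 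The paper's terse ``$C^{(1)}$ deforms in an at least 2-dimensional family, hence is a member of $(l_t)_{t\in T}$'' implicitly rests on exactly this trapping, and it disposes of both cases at once. Second, the step ``arithmetic genus $0$ propagates from the generic fiber'' is the one place where your assertion outruns its justification: constancy of $p_a$ requires flatness of $r$, and miracle flatness is not available because $T$ is a priori only normal, not smooth; this is precisely what the paper's appeal to (an adaptation of) \cite[Theorem II.2.8]{Ko96} supplies, yielding the $\bP_1$-bundle structure and the smoothness of $T$ simultaneously, so you should cite or reprove that statement rather than assert the propagation. Finally, your identification $X\simeq\bP(T_T)$ via the factorisation of $\theta$ through a surjection $r^*T_T\to L$ and a fibrewise degree count is a clean, self-contained substitute for the paper's citation of Lemma \ref{cont} (which in turn rests on \cite{KPSW00}) and is correct.
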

\begin{proof} 
As a first step, we show that the fibers or $r$ must be irreducible. Assume the contrary and let $r^{-1}(t_0) = l_{t_0} = C^{(1)} + R$ be a reducible fiber 
such that $L \cdot C^{(1)} \geq 1$. Then $C^{(1)}$ deforms in an at least 2-dimensional family, hence $C^{(1)}$ is a member of $(l_t)_{t\in T}$, i.e.,
$C^{(1)} = l_{t_1} $ for a suitable $t_1 \in T$. Since $r$ is holomorphic, this is only possible when $t_0 = t_1$, a contradiction.

So $r: X \to T$ is a holomorphic, equidimensional map of normal complex spaces and every fiber of $r$ is a reduced, irreducible rational curve.  Now the arguments of \cite[Theorem II.2.8]{Ko96} can be adapted to our situation and it follows that $r$ is a $\bP_1$-bundle.
In particular, $T$ is smooth and $X = \bP(T_T)$.
\end{proof} 
Recall that a surface in class $\sC$ is K\"ahler. 
In total we have shown:
\begin{theorem} \label{main1}
Let $X$ be a compact contact threefold in class $\sC$. If the rational quotient has dimension 2, then $X$ is K\"ahler
and of the form $\bP(T_Y)$ with a K\"ahler surface $Y$. The projection $X \to Y$ is the rational quotient, i.e., $Y$ is not
uniruled. 
\end{theorem}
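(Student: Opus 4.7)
The plan is straightforward assembly of the material developed in this section plus one classification-theoretic external input. First I would apply Proposition \ref{iso} to obtain that the rational quotient $r : X \to T$ is holomorphic and equidimensional, and then feed this conclusion directly into Proposition \ref{bundle} to upgrade $r$ to a $\bP_1$-bundle with $X \simeq \bP(T_T)$. Setting $Y := T$, both the structural isomorphism $X \simeq \bP(T_Y)$ and the identification of $r$ with the bundle projection $X \to Y$ are immediate, so the shape of $X$ required by the theorem is in hand before any new work.

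Next I would dispatch non-uniruledness of $Y$. This is exactly the Graber--Harris--Starr theorem cited in the opening paragraph of the section: the base of the maximal rationally connected fibration of a compact manifold in class $\sC$ is itself never uniruled. Since $Y = T$ is by construction this base, there is nothing further to do.

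The remaining step is K\"ahlerness. I would attack this by first showing that the surface $Y$ is K\"ahler. For this, note that $Y$ belongs to class $\sC$: choose a modification $\pi : \hat X \to X$ with $\hat X$ K\"ahler, and observe that $r \circ \pi : \hat X \to Y$ is a surjective holomorphic map from a compact K\"ahler manifold onto $Y$, so $Y$ is meromorphically dominated by a K\"ahler manifold and in particular lies in class $\sC$. Now I would invoke the fact recalled explicitly by the authors just before the theorem statement, namely that a smooth compact complex surface in class $\sC$ is automatically K\"ahler (via parity of $b_1$ together with the Kodaira--Buchdahl--Lamari characterisation of K\"ahler surfaces). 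K\"ahlerness of $X$ then follows because projective bundles over K\"ahler manifolds are K\"ahler, using the standard construction $c_1(\sO_{\bP(T_Y)}(1)) + N \cdot r^*\omega_Y$ for $N \gg 0$.

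The main obstacle here is more psychological than technical: almost all of the work has been concentrated in Proposition \ref{iso}, so the theorem is essentially a bookkeeping statement combining the two preceding propositions, the Graber--Harris--Starr theorem, and the surface-level equivalence of class $\sC$ with K\"ahlerness. The only external ingredient one genuinely needs is this last equivalence for surfaces; without it one would be stuck with $Y$ merely known to lie in class $\sC$.
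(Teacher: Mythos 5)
Your proposal is correct and follows essentially the same route as the paper: the theorem is indeed just the assembly of Propositions \ref{iso} and \ref{bundle}, the Graber--Harris--Starr result quoted at the start of the section, and the fact (recalled by the authors immediately before the statement) that a surface in class $\sC$ is K\"ahler. Your additional remarks justifying that $Y$ lies in class $\sC$ and that a projective bundle over a K\"ahler surface is K\"ahler are correct details the paper leaves implicit.
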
   
%
%
%%%%%%%%%%%%%%%%%%%%%%%%%%%%%%%%%%%%%%%%%%%%%%
%%% SECTION 1DIMENSIONAL RATIONAL QUOTIENT %%%
%%%%%%%%%%%%%%%%%%%%%%%%%%%%%%%%%%%%%%%%%%%%%%
%
%
\section{The case of a 1-dimensional rational quotient}
In this section we assume that $X$ is a compact contact threefold in class $\sC$ with contact line bundle $L$ 
and a rational quotient
$ r: X \dasharrow Q $
of dimension $\dim Q = 1$. 
Then necessarily $X$ is Moishezon and $Q$ is a smooth curve $B$ of genus at least $1$. We observe that
$r: X \to B$ is holomorphic. Our aim is to show that $X$ is of the form $X = \bP(T_Y)$ for some surface $Y$. The surface $Y$ is then necessarily Moishezon, and since
a smooth Moishezon surface is projective, we are going to show directly that $X$ is projective. 

Let $B_0$ be the set of points $b$ in $B$ such that $r^{-1}(b) = X_b$ is smooth.
\begin{lemma} \label{fiber} 
Let $b \in B_0$. Then $X_b$ is a Hirzebruch surface
$X_b = \bP(\sO_{\bP_1} \oplus \sO_{\bP_1}(-e))$
with $e > 0$ even.
\end{lemma}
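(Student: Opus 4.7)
The plan is a three-step analysis combining adjunction, the Enriques--Kodaira classification, and a computation with the restricted contact sequence. First, since $r$ is holomorphic onto the smooth curve $B$, the normal bundle $N_{X_b/X}$ of a smooth fiber is trivial, and adjunction gives $K_{X_b} = K_X|_{X_b} = -2L|_{X_b}$; in particular $-K_{X_b}$ is divisible by $2$ in $\Pic(X_b)$. Moreover $X$ is Moishezon, so the smooth surface $X_b$ is projective. The general fiber of the rational quotient is rationally connected, and since rational connectedness is invariant under smooth deformations while $B_0$ (the complement of finitely many points in $B$) is connected, every smooth fiber is rationally connected and hence rational.

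Second, I would classify smooth rational surfaces $S$ with $-K_S$ divisible by $2$ in $\Pic(S)$. A minimal model of $S$ is $\bP^2$ or some $\mathbb{F}_e = \bP(\sO \oplus \sO(-e))$; on $\bP^2$ one has $-K = 3H$, which is not $2$-divisible, while on $\mathbb{F}_e$ the class $-K = 2C_0 + (e+2)F$ is $2$-divisible iff $e$ is even. Any blow-up introduces a primitive exceptional class appearing with coefficient $-1$ in $-K$, so no non-minimal rational surface qualifies. Hence $X_b \cong \mathbb{F}_e$ for some even $e \geq 0$.

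Third, I would exclude $e = 0$ using the restricted contact structure. Restrict the contact sequence \eqref{contactseq} to $X_b$ and consider the composition
\[
 \phi \colon T_{X_b} \hookrightarrow T_X|_{X_b} \twoheadrightarrow L|_{X_b}.
\]
If $\phi = 0$, then $T_{X_b} \subset F|_{X_b}$ inside $T_X|_{X_b}$; both being rank-$2$ subbundles, they coincide, which forces $L|_{X_b} \cong N_{X_b/X} = \sO_{X_b}$ and $K_{X_b} = 0$---impossible on a rational surface. Otherwise $\phi$ is generically surjective; letting $\sG$ denote the saturation of $\ker \phi$, comparing first Chern classes yields $c_1(\sG) = c_1(T_{X_b}) - c_1(L|_{X_b}) = c_1(L|_{X_b})$, so $\sG \cong L|_{X_b} = -K_{X_b}/2$ as line bundles. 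On $\mathbb{F}_0 = \bP^1 \times \bP^1$ the line bundle $L|_{X_b}$ has bidegree $(1,1)$, and the resulting injection $\sO(1,1) \hookrightarrow T_{\bP^1 \times \bP^1} = \sO(2,0) \oplus \sO(0,2)$ contradicts $H^0(\sO(-1,1)) = H^0(\sO(1,-1)) = 0$. Thus $e > 0$.

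The substantive content is in step three: the first two steps combine a direct adjunction computation with a textbook application of the Enriques--Kodaira classification, whereas excluding $\bP^1 \times \bP^1$ genuinely requires the contact structure and is where the real difficulty of the lemma lies.
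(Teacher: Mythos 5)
Your proof is correct and follows essentially the same route as the paper: adjunction gives $K_{X_b} = -2L|_{X_b}$, the $2$-divisibility of $K_{X_b}$ forces $X_b$ to be a minimal rational surface other than $\bP_2$ or an odd Hirzebruch surface, and the quadric is excluded via the restricted contact structure. Your map $\phi$ is exactly $\theta|_{X_b}$ viewed as a section of $\Omega^1_{X_b}\otimes L|_{X_b}$, and your kernel-line-bundle computation on $\bP_1\times\bP_1$ amounts to the paper's observation that $H^0(\Omega^1_{\bP_1\times\bP_1}\otimes \sO(1,1)) = H^0(\sO(-1,1)\oplus\sO(1,-1)) = 0$; your normal-bundle argument for why $\phi\neq 0$ is a nice self-contained alternative to the paper's appeal to the fact that $F$-integral subvarieties of a contact threefold have dimension at most $1$.
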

\begin{proof} 
By adjunction $K_{X_b} = -2L \vert _ {X_b}$, hence $X_b$ is minimal. Moreover, $X_b$ cannot be a projective plane $\bP_2$  or a Hirzebruch 
surface with odd
$e$. To exclude the quadric ($e = 0$), observe that $X_b$ is not $F$-integral, i.e., the restriction of the contact form $\theta$ to $X_b$ does not vanish identically, and hence 
$$ \theta \vert_{X_b} \in H^0(X_b,\Omega^1_{X_b} \otimes L \vert_ {X_b}) \ne 0,$$ 
which is impossible for $X_b= \bP_1  \times \bP_1$. 
\end{proof} 
Every smooth fiber $X_b$ of $r$ has a uniquely defined non-splitting 1-dimensional family of rational curves, namely the ruling
lines. All these rational curves together give rise to a 2-dimensional family $(l_y)_{y \in Y}$ of rational curves in $X$, where
$Y$ is the irreducible component of the cycle space parametrising generically the ruling lines. 
We obtain an almost holomorphic map
$ \pi: X \dasharrow Y$ and a holomorphic map $g: Y \to B$, $g(y) = r(l_y)$,
\[
 \begin{xymatrix}{
X \ar@{-->}[r]^\pi & Y \supset Y_0 \ar[d]_g \\
  & B \supset B_0}  
 \end{xymatrix}
\]

such that $Y_0 = g^{-1}(B_0)$ is a $\bP_1$-bundle over $B_0$ and $\pi$ is again a $\bP_1$-bundle over $Y_0$. In fact,
if $Y_b$ is the fiber over $b \in B_0$, then
$X_b = \bP(\sO \oplus \sO(-e_b) \vert Y_b)$.

The technical key to the main result of this section is
\begin{proposition} \label{nonsplit}
The family $(l_y)_{y \in Y}$ does not split.
\end{proposition}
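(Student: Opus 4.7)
My plan is to argue by contradiction, combining deformation arguments in the spirit of Proposition \ref{iso} with the homological obstruction furnished by Lemma \ref{split}. Suppose some cycle $l_{y_0}$ is reducible. Since $L \cdot l_{y_0} = 1$, I write $l_{y_0} = C + R$ where $C$ is an irreducible rational curve with $L \cdot C = 1$ and $R$ is a non-zero effective curve with $L \cdot R = 0$; in particular, every irreducible component of $R$ is $F$-integral, as in the proof of Lemma \ref{integral}. Because $B$ has genus $\geq 1$, no rational curve in $X$ dominates $B$, so $C$ and each component of $R$ lie in a single fiber $X_{b_0}$.

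Next I apply Fact \ref{DimOfDeform} to $-K_X \cdot C = 2$, obtaining an irreducible family $(C_t)_{t \in T'}$ of rational deformations of $C$ with $\dim T' \geq 2$. All members lie in fibers of $r$, so the image of $T' \to B$, $t \mapsto r(C_t)$, is either a single point or all of $B$. If it is a point, the 2-dimensional family $(C_t)$ sits in the 2-dimensional surface $X_{b_0}$, and an incidence-dimension count yields a 1-dimensional subfamily of rational curves through a fixed point with $L \cdot C_t = 1$, contradicting Lemma \ref{integral}. If the image is $B$, then $(C_t)$ covers $X$; a direct numerical check on the smooth Hirzebruch fibers, using $L|_{X_b} = C_0 + \frac{e_b + 2}{2} f$ with $e_b \geq 2$ even (Lemma \ref{fiber}), shows that the only irreducible curves on $X_b$ of $L$-degree one are the ruling lines. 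Hence $(C_t)$ coincides with $(l_y)$ as an irreducible component of the cycle space, so $C = l_{y_1}$ for some $y_1 \in Y_{b_0}$ with $y_1 \neq y_0$, and we arrive at the cycle identity $l_{y_0} = l_{y_1} + R$ inside $X_{b_0}$.

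To conclude, I would exploit the fact that $Y_{b_0} = g^{-1}(b_0)$ is connected, since $g \colon Y \to B$ has connected generic fiber $\bP_1$ and hence connected special fibers by Stein factorization. The algebraic family $(l_y)_{y \in Y_{b_0}}$ of 1-cycles on the compact Moishezon surface $X_{b_0}$ is thus parametrized by a connected base, so all members are algebraically, and hence homologically, equivalent; in particular $[R] = [l_{y_0}] - [l_{y_1}] = 0$ in $H_2(X_{b_0}, \bQ)$. After passing to the normalization $\tilde S$ of the irreducible component of $X_{b_0}$ supporting $R$ and replacing $R$ by its strict transform, $\tilde S$ is a compact connected normal Moishezon surface, so Lemma \ref{split} produces a linear functional $\varphi$ satisfying $\varphi([\Gamma]) \geq 1$ for every irreducible curve $\Gamma \subset \tilde S$. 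Writing $R = \sum a_i R_i$ with $a_i \geq 1$ then gives $\varphi([R]) \geq \sum a_i \geq 1$, contradicting $[R] = 0$.

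The principal technical obstacle will be the last step: carefully transferring the vanishing $[R] = 0$ from the possibly reducible, non-normal fiber $X_{b_0}$ to a single compact connected normal Moishezon surface, so that Lemma \ref{split} applies rigorously. A secondary subtlety, should $Y_{b_0}$ be reducible, is to connect $y_0$ and $y_1$ by a chain of 1-parameter subfamilies inside $Y_{b_0}$ along which the cycle class of $l_y$ remains constant.
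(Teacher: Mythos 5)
Your opening decomposition is where the argument breaks. From $L\cdot l_{y_0}=1$ you write $l_{y_0}=C+R$ with $L\cdot C=1$ and $L\cdot R=0$, but this is not justified: $L$ is not nef, not even on the smooth fibers (on $X_b=\bP(\sO\oplus\sO(-e))$ with $e>2$ one has $L\cdot C_0=(2-e)/2<0$ for the negative section), and a priori even less so on the components of a singular fiber where the splitting occurs. Writing $l_{y_0}=\sum a_i C_i$, the identity $\sum a_i(L\cdot C_i)=1$ only guarantees \emph{some} component with $L\cdot C_i\geq 1$; it may well satisfy $L\cdot C_i\geq 2$, compensated by components of negative $L$-degree. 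The paper's proof is organised around exactly this dichotomy, and the case $L\cdot C^{(1)}\geq 2$ — where $-K_X\cdot C^{(1)}\geq 4$ produces an at least $4$-dimensional family that must itself split, forcing the iterated splitting procedure with Lemma \ref{Moishezon} and the normalisation argument via Lemma \ref{split} — is the technical core of the proposition. Your proof omits this case entirely, so it only covers what the paper treats as case (1).

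Within the case you do treat ($L\cdot C=1$), your route is sound in outline and diverges from the paper only at the end: where the paper derives a contradiction from the connectedness of the fibers of the graph map $p:Z\to X$ (as in Proposition \ref{iso}), you instead argue that $[l_{y_0}]=[l_{y_1}]$ in $H_2(X_{b_0},\bQ)$ over the connected base $Y_{b_0}$, hence $[R]=0$, and then want to contradict Lemma \ref{split}. This could work, but you have correctly flagged the unresolved difficulty yourself: Lemma \ref{split} applies to a connected \emph{normal} Moishezon surface, whereas $[R]=0$ lives in the homology of the possibly reducible, non-normal fiber $X_{b_0}$, and the vanishing does not obviously transfer to the normalisation of a single component (a component of $R$ could be nontrivial there while cancellation happens globally). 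The paper's fiber-connectedness argument avoids this issue altogether. To repair the proposal you would need both to add the $L\cdot C\geq 2$ branch (essentially reproducing cases (2a) and (2b) of the paper) and to close the homology-transfer gap in your final step, or replace that step by the Zariski main theorem argument.
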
 
\begin{proof}
Suppose $(l_y)$ splits. Then there exists a point $y_0 \in Y$, an irreducible curve $C^{(1)}$ with $L \cdot C^{(1)} \geq 1$
and an effective curve $R$ such that $l_{y_0} = C^{(1)} + R$. 
As seen before the curve $C^{(1)}$ deforms in an at least 2-dimensional family $(C^{(1)}_t)_{t \in T_1}$.

(1) \, Let us first consider the case $L \cdot C^{(1)}_t = 1$. If the family $(C^{(1)}_t)_{t \in T_1}$ covers a surface, we find a 1-dimensional subfamily through a fixed point and contradict Lemma \ref{integral}. If $(C^{(1)}_t)_{t \in T_1}$ covers all of $X$, we recover the original family $(l_y)$ as follows:  notice that the general $C_t^{(1)}$ will be an irreducible rational curve in a smooth fiber $X_b$ and $-K_{X_b} \cdot C_t = 2$ by adjunction. This implies that the general curve 
$C_t^{(1)}$ must be a ruling line, i.e., the general curve $C_t$ must be a curve $l_t$. This may now be excluded using Lemma \ref{integral} by the same arguments as in Proposition \ref{iso}.

(2) \, Having ruled out $L \cdot C^{(1)}_t = 1$, we consider the case $L \cdot C^{(1)}_t \geq 2$, i.e.,
$$ -K_X \cdot C^{(1)}_t = 2L \cdot C^{(1)}_t  \geq 4.$$ 
(2a) \, Assume that the family $(C^{(1)}_t)_{t \in T_1}$ covers all of $X$ and choose a general point $x \in X$. Dimension count shows that there is a 2-dimensional subfamily $(C^{(1)}_t)_{t \in T_1(x)}$ 
through the point $x$, necessarily
filling a rational surface $S$, which must be a fiber of $r$. Since $x$ is general, there is a $b \in B$ with $X_b$ smooth
such that $S = X_b$. The family $(C^{(1)}_t)_{t \in T_1(x)}$ splits as $C^{(1)}_{t_1} = C^{(2)} + R_1$ with $L \cdot C^{(2)} \geq 1$. If  $L \cdot C^{(2)} \geq 2$ we repeat to whole process. Assume that $L \cdot C^{(k)} \geq 2$ for all $k$. Then, we obtain a decomposition of the homology class of $C^{(1)}_{t_1}$ as a sum of arbitrary many effective curves $ C^{(1)}_{t_1} \equiv \sum_{k=1} ^K C^{(k)} + R_{K-1}$. 
As we can always choose a subfamily through a point of $S$, we can assume that $\sum_{k=1} ^K C^{(k)} + R_{K-1} \subset S$ for all $K$. Calculating the degree with respect to an ample line bundle $H$ on $S$, we obtain a contradiction.  Hence at some stage the procedure
has to stop, i.e. $L \cdot C^{(k)}_t = 1$ and we conclude by Lemma \ref{integral}. 

(2b) \, It remains to consider the  case the case where $L \cdot C^{(1)}_t \geq 2$ and the family $(C^{(1)}_t)$ covers a surface $S$, which is a component of a fiber $X_b$ of $r$. The family must split and we choose a splitting component $C^{(2)}$ such that $L \cdot C^{(2)} \geq 1$. If $L \cdot C^{(2)} = 1$, we are done again; if $L \cdot C^{(2)} \geq 2$, we obtain an at least 4-dimensional family  $(C^{(2)}_t)$. If this family covers $X$, we are done by the arguments of (2a) applied to $(C^{(2)}_t)$, instead of $(C^{(1)}_t)$. Otherwise, the family $(C^{(2)}_t)$ fills a component $S'$ of the same fiber $X_b$, and as in the proof of  Proposition \ref{iso}, by choosing $C^{(2)}$ carefully, we may assume that $S' = S$.  Now we are in completely the same situation as in the proof of Proposition \ref{iso} and proceed as described there.
\end{proof} 
In order to apply Proposition \ref{nonsplit}, we consider the normalised graph $p: Z \to X$ of the family $(l_y)_{y\in Y}$. 
\[
 \begin{xymatrix}{
 Z \ar[rr]^p \ar[dr]^q && X \ar@{-->}[dl]^\pi\\
 & Y& 
}
 \end{xymatrix}
 \]
\begin{proposition} 
The map $p: Z \to X$ is biholomorphic.
\end{proposition}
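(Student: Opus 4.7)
The strategy is to mimic the proof of Proposition \ref{iso}: first verify that $p$ is bimeromorphic with connected fibers, then show that it has finite fibers, and conclude from Zariski's main theorem that $p$ is biholomorphic. The decisive new ingredient is Proposition \ref{nonsplit}, which will eliminate the long inductive splitting argument that took up most of the proof of Proposition \ref{iso}.

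To see that $p$ is bimeromorphic, I would use the fact that $\pi \colon X \dasharrow Y$ is almost holomorphic together with Proposition \ref{nonsplit}: for a general $x \in X$ there is a unique $y \in Y$ with $x \in l_y$, namely $y = \pi(x)$, and the corresponding member $l_y$ is irreducible. Hence $p$ has degree one on a dense open set and is bimeromorphic. Since $Z$ is normal by construction and $X$ is smooth, Zariski's main theorem then forces every fiber of $p$ to be connected.

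Next I would show $p$ has finite fibers. Suppose for contradiction there exists $x_0 \in X$ with $\dim p^{-1}(x_0) \geq 1$, equivalently, $\dim Y(x_0) \geq 1$ where $Y(x_0) = \{y \in Y : x_0 \in l_y\}$. Since the general $l_y$ avoids $x_0$ (by almost holomorphicity of $\pi$), we have $\dim Y(x_0) \leq 1$, and after replacing $Y(x_0)$ by one of its irreducible components we may assume it is a curve. By Proposition \ref{nonsplit} every member $l_y$ is irreducible, so the curves $l_y$ for $y \in Y(x_0)$ form a genuine one-dimensional family of irreducible rational curves through the fixed point $x_0$. The intersection number $L \cdot l_y$ is constant in the family and equals $1$ for a general ruling line (by adjunction: $K_X \cdot l_y = K_{X_b} \cdot l_y = -2$ in a smooth Hirzebruch fiber $X_b$, so $L \cdot l_y = 1$). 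This contradicts Lemma \ref{integral}, which would force $L \cdot l_y \geq 2$.

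Combining the two steps, $p$ is a proper bimeromorphic morphism between normal complex spaces with finite connected fibers, hence an isomorphism. The only place where one has to be careful is in invoking Proposition \ref{nonsplit} to rule out the alternative in which all $l_y$ with $y \in Y(x_0)$ are reducible; this is precisely the scenario that, in the analogous situation of Proposition \ref{iso}, required the elaborate inductive splitting argument and Lemma \ref{split}. In our current setting Proposition \ref{nonsplit} makes this case vacuous, so the argument reduces to a direct application of Lemma \ref{integral}.
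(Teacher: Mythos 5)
Your proposal is correct and follows essentially the same route as the paper: generic bijectivity plus Zariski's main theorem reduce the claim to excluding positive-dimensional fibers of $p$, and such a fiber would produce a one-dimensional family of curves $l_y$ through a fixed point, all irreducible by Proposition \ref{nonsplit} and with $L \cdot l_y = 1$, contradicting Lemma \ref{integral}. The extra detail you supply (constancy of $L\cdot l_y$ and the adjunction computation in a smooth fiber) is consistent with what the paper leaves implicit.
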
 
\begin{proof} 
The map $p$ is generically biholomorphic,
hence by Zariski's main theorem, it suffices to show that $p$ does not have positive-dimensional fibers. 
So suppose that $\dim p^{-1}(x) = 1$. Then there exists a 1-dimensional subfamily $(l_y)_{y \in Y(x)}$ through $x$
with all $l_y$ irreducible by the previous proposition. Since $L \cdot l_y = 1$, we contradict Lemma \ref{integral}. 
\end{proof}
As before in Proposition \ref{bundle} we conclude:
\begin{corollary}
The map  $\pi: X \to Y$ is a $\bP_1$-bundle. 
\end{corollary}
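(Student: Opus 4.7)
The plan is to mirror the proof of Proposition \ref{bundle} almost verbatim. Since the preceding proposition establishes that $p \colon Z \to X$ is biholomorphic, the composition $\pi = q \circ p^{-1} \colon X \to Y$ is a holomorphic, proper, equidimensional map whose set-theoretic fiber over $y \in Y$ is the curve $l_y$.

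The first step is to verify that every fiber of $\pi$ is a reduced, irreducible, rational curve. Irreducibility of each $l_y$ is precisely Proposition \ref{nonsplit}. Reducedness follows from a numerical argument: on a general smooth fiber $X_b$ of $r$, the curve $l_y$ is a ruling line of a Hirzebruch surface, so adjunction on $X_b$ gives $-K_X \cdot l_y = 2$ and hence $L \cdot l_y = 1$; since this intersection number is constant on the (connected) cycle space $Y$, any decomposition $l_y = m \cdot C$ with $C$ reduced irreducible would yield $m \cdot (L \cdot C) = 1$ in $\mathbb Z$ with $m \geq 1$, forcing $m = L \cdot C = 1$. Rationality of each $l_y$ is then inherited from the general member by semicontinuity of geometric genus in an irreducible family of reduced irreducible curves.

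Having shown that $\pi \colon X \to Y$ is a proper, holomorphic, equidimensional map of normal complex spaces with every fiber a reduced, irreducible rational curve, I would conclude by invoking the adaptation of \cite[Theorem II.2.8]{Ko96} already carried out in the proof of Proposition \ref{bundle}, which yields directly that $\pi$ is a $\mathbb P_1$-bundle. The substantive content of the corollary is thus Proposition \ref{nonsplit}; once that is in hand, together with the biholomorphicity of $p$ from the preceding proposition, the main obstacle is just the reducedness check above, and the bundle conclusion is a straightforward transcription of the endgame of the previous section.
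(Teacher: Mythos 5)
Your proof is correct and takes essentially the same route as the paper, which disposes of this corollary with the single remark ``as before in Proposition \ref{bundle}'': the biholomorphicity of $p$ makes $\pi=q\circ p^{-1}$ holomorphic and equidimensional, Proposition \ref{nonsplit} gives irreducibility of the fibers, and the adaptation of Koll\'ar's Theorem II.2.8 finishes exactly as in Proposition \ref{bundle}. Your extra check of reducedness and rationality of the special fibers (via $L\cdot l_y=1$ and constancy of the arithmetic genus) is a correct filling-in of details the paper leaves implicit.
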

We may now apply Lemma \ref{cont} below to the map $\pi: X \to Y$ and have shown:
\begin{theorem}\label{main2} 
Let $X$ be a compact contact threefold in class $\sC$ with 1-dimensional rational quotient $B$.
Then $X$ is projective and there is a smooth projective surface $Y$ with a $\bP_1$-fibration $Y \to B$ such 
that $X \simeq \bP(T_Y)$. 
\end{theorem}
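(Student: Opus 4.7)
The plan is to assemble the structural results already established and apply the forthcoming Lemma \ref{cont}. By the preceding corollary we have a holomorphic $\bP_1$-bundle $\pi: X \to Y$ onto a normal surface $Y$, together with a holomorphic map $g: Y \to B$ whose general fibers over $B_0$ are copies of $\bP_1$.

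The first step is to apply Lemma \ref{cont} to the $\bP_1$-bundle $\pi: X \to Y$ equipped with the contact structure $\theta$. The content of such a lemma is that a contact structure on the projectivisation of a rank-$2$ vector bundle over a surface forces the underlying bundle to coincide, up to a twist absorbed in the projectivisation, with the tangent bundle of the base. Applied to $\pi$, this gives $X \simeq \bP(T_Y)$.

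The remaining task is to check that $Y$ is a smooth projective surface and that $X$ is projective. Smoothness of $Y$ is automatic: since $\pi$ is locally trivial with fiber $\bP_1$ and $X$ is smooth, the base must be smooth as well. Since $X$ is Moishezon (as noted at the start of this section, because $\dim B = 1$) and $\pi: X \to Y$ is a proper surjective holomorphic map, $Y$ is Moishezon. A smooth Moishezon surface is projective, hence $Y$ is projective. Projectivity of $X = \bP(T_Y)$ then follows automatically from projectivity of the base. Finally, the map $g: Y \to B$ is a $\bP_1$-fibration in the stated sense, since its general fiber is $\bP_1$ by construction of $Y$ as the parameter space of the ruling lines over $B_0$.

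The only substantive point is the invocation of Lemma \ref{cont}; everything else is bookkeeping. The main geometric obstacle, namely showing that the family of ruling lines does not split and that $\pi$ is everywhere a genuine $\bP_1$-bundle, was already overcome in Proposition \ref{nonsplit} and the two propositions following it, so no further difficulty is expected at this concluding step.
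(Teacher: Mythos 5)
Your proposal is correct and follows essentially the same route as the paper: the theorem is obtained by combining the corollary that $\pi: X \to Y$ is a $\bP_1$-bundle with Lemma \ref{cont}, and the remaining observations (smoothness of $Y$ from local triviality, $Y$ Moishezon hence projective, hence $X$ projective, and $g: Y \to B$ a $\bP_1$-fibration) are exactly the bookkeeping the paper performs. The only cosmetic difference is that smoothness of $Y$ should strictly be noted before invoking Lemma \ref{cont}, since that lemma assumes the base is a complex manifold, but this is immediate from the bundle structure and not a gap.
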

\begin{lemma} \label{cont} 
Let $Y$ be a complex manifold of dimension $n+1$ and $\pi: X \to Y$ be a $\bP_n$-bundle. If $X$ is a contact manifold, then 
$X \simeq \bP(T_Y)$.
\end{lemma}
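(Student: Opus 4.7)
My plan is to first recognise $X$ as a projectivised vector bundle $\bP(E)$ over $Y$, and then to produce, directly from the contact form, a canonical isomorphism $E \cong T_Y$.

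First I would compute $L|_{F_y}$ on a fibre $F_y \simeq \bP_n$ of $\pi$. Non-degeneracy of the symplectic form $\bigwedge^2 F \to L$ on the rank-$2n$ bundle $F$ forces $\det F \cong L^{\otimes n}$, whence $K_X \cong L^{-(n+1)}$ from the contact sequence \eqref{contactseq}. Comparing with $K_X|_{F_y} = K_{\bP_n} = \sO(-n-1)$ (read off from the relative tangent sequence $0 \to T_{X/Y} \to T_X \to \pi^* T_Y \to 0$) yields $L|_{F_y} \cong \sO_{\bP_n}(1)$. Cohomology and base change then make $E := \pi_* L$ locally free of rank $n+1$ on $Y$, and the evaluation $\pi^* E \twoheadrightarrow L$ induces an isomorphism $X \cong \bP(E)$ sending $L$ to $\sO_{\bP(E)}(1)$ (Grothendieck's convention $\bP(E) = \mathrm{Proj}\,\mathrm{Sym}^\bullet E$).

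Next I would dualise the contact sequence to $0 \to L^{-1} \to \Omega^1_X \to F^* \to 0$ and compose the inclusion with the relative cotangent projection $\Omega^1_X \twoheadrightarrow \Omega^1_{X/Y}$. On every fibre this composition is a morphism $\sO(-1) \to \Omega^1_{\bP_n}$, which vanishes since $H^0(\bP_n, \Omega^1_{\bP_n}(1)) = 0$ by the Euler sequence. Hence the composition vanishes globally, so $L^{-1}$ is contained in $\pi^* \Omega^1_Y$. A brief diagram chase using the locally free quotients $F^* = \Omega^1_X/L^{-1}$ and $\Omega^1_{X/Y} = \Omega^1_X/\pi^* \Omega^1_Y$ identifies $\pi^* \Omega^1_Y / L^{-1}$ with the kernel of the surjection $F^* \twoheadrightarrow \Omega^1_{X/Y}$, so $L^{-1} \subset \pi^* \Omega^1_Y$ is in fact a subbundle.

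Finally, twist by $L$ and push forward. The projection formula gives $\pi_*(L \otimes \pi^* \Omega^1_Y) = E \otimes \Omega^1_Y$, and the inclusion $\sO_X \hookrightarrow L \otimes \pi^* \Omega^1_Y$ corresponds to a morphism $\varphi : T_Y \to E$ of rank-$(n+1)$ bundles on $Y$. At each $y \in Y$ the restriction of $L^{-1} \hookrightarrow \pi^* \Omega^1_Y$ to $F_y \cong \bP(E_y)$ is a subbundle inclusion $\sO(-1) \hookrightarrow \Omega^1_{Y,y} \otimes \sO$; dualising gives a surjection $T_{Y,y} \otimes \sO \twoheadrightarrow \sO_{\bP(E_y)}(1)$ which, under the identification $E_y = H^0(\bP(E_y), \sO(1))$, is precisely the statement that the linear map $\varphi_y : T_{Y,y} \to E_y$ is surjective --- and hence, by rank, an isomorphism. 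Thus $\varphi$ is a global isomorphism and $X \cong \bP(E) \cong \bP(T_Y)$. The main obstacle I anticipate is securing the vanishing of the composition $L^{-1} \to \Omega^1_{X/Y}$ and the dictionary between the fibrewise subbundle condition on $L^{-1} \subset \pi^* \Omega^1_Y$ and the surjectivity of $\varphi_y$; the remaining bundle-theoretic manipulations are routine.
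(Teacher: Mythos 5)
Your proof is correct and follows essentially the same route as the paper: compute $L|_{F_y}=\sO(1)$ via adjunction, set $\sE=\pi_*L$ to get $(X,L)\simeq(\bP(\sE),\sO_{\bP(\sE)}(1))$, and then identify $\sE\simeq T_Y$ using the contact form. The only difference is that for the last step the paper simply cites the end of the proof of Theorem 2.12 in \cite{KPSW00}, whereas you reproduce that argument in full (the vanishing of $L^{-1}\to\Omega^1_{X/Y}$ via $H^0(\bP_n,\Omega^1_{\bP_n}(1))=0$ and the fibrewise surjectivity of $T_{Y,y}\to\sE_y$), all of which checks out.
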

\begin{proof} 
Let $Z \simeq \mathbb P_n$ be a fiber of $\pi$. Adjunction implies that the contact line bundle $L$ restricted to $Z$ fulfils  $L|_Z = \mathcal O_Z(1)$.
Setting $ \sE = \pi_*(L)$, we conclude 
\[
(X,L) \simeq (\mathbb P ( \mathcal E), \mathcal O _{\mathbb P ( \mathcal E)} (1)).
\]
Now the last part of the proof of Theorem 2.12 in  \cite{KPSW00} can be applied 
and shows that $\sE \simeq T_Y$.
\end{proof} 
%
%\bibliography{contact}
%\bibliographystyle{amsalpha}
%
%%%%%%%%%%%%%%%%%%
%%% REFERENCES %%%
%%%%%%%%%%%%%%%%%%

\providecommand{\bysame}{\leavevmode\hbox to3em{\hrulefill}\thinspace}

\end{document}